\documentclass[a4paper,11pt]{amsart}

\usepackage {amssymb}
\usepackage{amsmath}
\usepackage{amsthm}
\usepackage[all]{xy}

\textheight 205mm \textwidth 155mm \topmargin 0.0cm
\setlength{\oddsidemargin}{0cm} \setlength{\evensidemargin}{0cm}

\theoremstyle{plain}
\newtheorem{theorem}{Theorem}[section]
\newtheorem{lemma}[theorem]{Lemma}

\newtheorem{proposition}[theorem]{Proposition}

\theoremstyle{definition}
\newtheorem{definition}[theorem]{Definition}
\newtheorem{remarks}[theorem]{Remarks}
\newtheorem{remark}[theorem]{Remark}

\def\ad{\mathop{\hbox {ad}}\nolimits}
\def\Cas{\mathop{\hbox {Cas}}\nolimits}
\def\dim{\mathop{\hbox {dim}}\nolimits}
\def\End{\mathop{\hbox {End}}\nolimits}
\def\rank{\mathop{\hbox {rank}}\nolimits}
\def\sgn{\mathop{\hbox {sgn}}\nolimits}
\def\str{\mathop{\hbox {str}}\nolimits}
\def\tr{\mathop{\hbox {tr}}\nolimits}

\numberwithin{equation}{section}

\begin{document}
\title[The Kostant criterion for Lie superalgebras]{An analogue of the Kostant criterion for quadratic Lie superalgebras}

\author{Yifang Kang}
\address{Institute of Mathematics and Physics, Central South University of Forestry and Technology, Changsha Hunan, 410004, P. R. China} \email{kangyf1978@sina.com}
\author{Zhiqi Chen}
\address{School of Mathematical Sciences and LPMC, Nankai University, Tianjin 300071, P.R. China} \email{chenzhiqi@nankai.edu.cn}

\begin{abstract}
Assume that $\mathfrak{r}$ be a finite dimensional complex Lie superalgebra with a non-degenerate super-symmetric invariant bilinear form, $\mathfrak{p}$ is a finite dimensional complex super vector space with a non-degenerate super-symmetric bilinear form, and $\nu: \mathfrak{r}\rightarrow\mathfrak{osp}(\mathfrak{p})$ is a homomorphism of Lie superalgebras. In this paper, we give a necessary and sufficient condition for $\mathfrak{r}\oplus\mathfrak{p}$ to be a quadratic Lie superalgebra. The criterion obtained is an analogue of a constancy condition given by Kostant in Lie algebra setting. As an application, we prove an analogue of the Parthasarathy's formula for the square of the Dirac operator attached to a pair of quadratic Lie superalgebras.
\end{abstract}

\keywords{Quadratic Lie superalgebra; exterior algebra; Clifford algebra; Kostant criterion; Dirac operator.}

\subjclass[2010]{17B10, 15A66.}

\date{\today}

\maketitle

%\tableofcontents

\setcounter{section}{0}

\section{Introduction}
 A {\it quadratic Lie superalgebra} is a  Lie superalgebra $\mathfrak{g}=\mathfrak{g}_{\bar 0}\oplus \mathfrak{g}_{\bar 1}$ with a non-degenerate invariant super-symmetric bilinear form $(\cdot,\cdot)$. We always assume that $(\cdot,\cdot)$ is {\it consistent}, that is, $(x,y)=0$ for any $x\in \mathfrak{g}_{\bar 0}$ and $y\in \mathfrak{g}_{\bar 1}$. Let $\mathfrak{r}$ be a subalgebra of a finite dimensional complex quadratic Lie superalgbra $\mathfrak{g}$ such that the restriction of $(\cdot,\cdot)$ on $\mathfrak{r}$ is non-degenerate. Denote by $\mathfrak{p}$ the orthogonal complement of $\mathfrak{r}$ in $\mathfrak g$ with respect to $(\cdot,\cdot)$. Then we have an orthogonal decomposition $\mathfrak{g}=\mathfrak{r}\oplus\mathfrak{p}$, where the restriction of $(\cdot,\cdot)$ on $\mathfrak{p}$ is also non-degenerate and $\mathfrak{p}$ is an $\mathfrak{r}$-module.

Conversely, let $\mathfrak{r}$ be a finite dimensional complex quadratic Lie superalgebra with respect to a bilinear form $(\cdot,\cdot)_\mathfrak{r}$, let $\mathfrak{p}$ be a finite dimensional complex super vector space with a non-degenerate super-symmetric bilinear form $(\cdot,\cdot)_\mathfrak{p}$, and let \begin{equation*}
\nu :\mathfrak{r}\rightarrow\mathfrak{osp}(\mathfrak{p})
\end{equation*}
be a $(\cdot,\cdot)_\mathfrak{p}$-invariant representation of $\mathfrak{r}$ on $\mathfrak{p}$. Define
\begin{equation*}
\mathfrak{g}=\mathfrak{r}\oplus\mathfrak{p}
\end{equation*}
and define a non-degenerate super-symmetric bilinear form $(\cdot,\cdot)_\mathfrak{g}$ on $\mathfrak{g}$ by
$$(\cdot,\cdot)_\mathfrak{g}|_\mathfrak{r}=(\cdot,\cdot)_\mathfrak{r},\quad (\cdot,\cdot)_\mathfrak{g}|_\mathfrak{p}=(\cdot,\cdot)_\mathfrak{p},\quad (\mathfrak{p},\mathfrak{r})_\mathfrak{g}=0.$$ %For simplicity, we denote $B_\mathfrak{g}(z_1,z_2)$ by $(z_1,z_2)$.
The pair $(\nu, (\cdot,\cdot)_\mathfrak{g})$ is of {\it Lie super type} if there exists a Lie superalgebraic structure $[\cdot,\cdot]$ on $\mathfrak{g}$ satisfying the following conditions:
\begin{enumerate}
\item[(a)] $\mathfrak{g}$ is a quadratic Lie superalgebra with respect to $(\cdot,\cdot)_\mathfrak{g}$, and
\item[(b)] $\mathfrak{r}$ is a subalgebra of $\mathfrak{g}$ and $[x,y]=\nu (x)y$ for any $x\in\mathfrak{r},y\in\mathfrak{p}$.
\end{enumerate}
In \cite{Ko2,Ko3}, Kostant studied the above problem in Lie algebra setting and obtained a constancy condition involving the Casimir element of $\mathfrak{r}$ and a cubic element in $(\Lambda^3(\mathfrak{p}))^\mathfrak{r}$ which is used to construct the cubic Dirac operator. In this paper, we obtain an analogue of a constancy condition for the case of Lie superalgebras based on the study in \cite{CK}.

We begin with the case $\mathfrak{r}=0$. For this case, it is to find all quadratic Lie superalgebraic structures on a complex super vector space with a non-degenerate super-symmetric bilinear form. Clearly, for any quadratic Lie superalgebra $\mathfrak{g}$, there exists a unique $\phi\in\Lambda_{\bar 0}^3\mathfrak{g}$ such that
\begin{equation*}
(\phi,z_1\wedge z_2\wedge z_3)=-\frac{1}{2}([z_1,z_2],z_3), \quad [z_1,z_2]=2\iota(z_1)\iota(z_2)\phi.
\end{equation*}
Motivated by the above fact, for any $\phi\in\Lambda_{\bar 0}^3\mathfrak{g}$, define a bracket $[\cdot,\cdot]^\phi$ on $\mathfrak{g}$ by
\begin{equation*}
[z_1,z_2]^\phi=2\iota(z_1)\iota(z_2)\phi.
\end{equation*}
We prove that the bracket $[\cdot,\cdot]^\phi$ defines a Lie superalgebraic structure on $\mathfrak{g}$ if and only if the Clifford square $\phi^2$ is a constant.

In the general case, let $\phi_\mathfrak{r}\in\Lambda_{\bar 0}^3(\mathfrak{r})$ be the cubic element corresponding to the quadratic Lie superalgebraic structure on $\mathfrak{r}$, and let $\phi_\mathfrak{p}\in \Lambda_{\bar 0}^3(\mathfrak{p})$ be the cubic element given as the projection of $\phi$ relative to the decomposition $\mathfrak{g}=\mathfrak{r}\oplus\mathfrak{p}$. If $(\nu,(\cdot,\cdot)_\mathfrak{g})$ is of Lie super type, then the cubic element $\phi$ is decomposed as
\begin{equation}\label{eq I.1}
\phi=\phi_\mathfrak{r}+\phi_\mathfrak{p}+\sum_{1\leq i\leq r}\nu_*(x_i)\wedge x^i,
\end{equation}
where $\{x_1,\ldots,x_r\}$ is a basis of $\mathfrak{r}$, $\{x^1,\ldots,x^r\}$ is the $(\cdot,\cdot)_\mathfrak{r}$-dual basis to $\{x_1,\ldots,x_r\}$, and \begin{equation*}
\nu_* :\mathfrak{r}\rightarrow\Lambda^2(\mathfrak{p})
\end{equation*}
is the unique Lie superalgebraic homomorphism induced by $\nu$. Moreover, $\phi_\mathfrak{p}\in (\Lambda_{\bar 0}^3(\mathfrak{p}))^\mathfrak{r}$. Conversely, for any $\phi_\mathfrak{p}\in (\Lambda_{\bar 0}^3(\mathfrak{p}))^\mathfrak{r}$, define the cubic element $\phi$ by (\ref{eq I.1}). We prove that $\phi^2$ is a scalar if and only of $\nu_*(\Cas_\mathfrak{r})+\phi^2_\mathfrak{p}$ is a constant, thus $(\nu,(\cdot,\cdot)_\mathfrak{g})$ is of Lie super type if and only if $\nu_*(\Cas_\mathfrak{r})+\phi^2_\mathfrak{p}$ is a constant.

This paper is organized as follows. In Section 2, we recall some basic facts about Clifford algebras and exterior algebras over super vector spaces. Sections 3 and 4 are to study the case $\mathfrak{r}=0$ and the general case, respectivley. As an application, we prove an analogue of the Parthasarathy's formula for the square of the Dirac operator attached to a pair of quadratic Lie superalgebras in Section 5.

\section{Preliminaries}
\subsection{Super vector spaces}
A $\mathbb{Z}_2$-graded space $V=V_{\bar 0}+V_{\bar 1}$ is called a \emph{super vector space}, where the elements of $V_{\bar 0}$ are even and those of $V_{\bar 1}$ are odd. Denote by $|x|\in\{{\bar 0},{\bar 1}\}$ the parity of a homogeneous element $x\in V$. (Whenever this notation is used, it implies that $x$ is homogeneous.) We say that a bilinear form $(\cdot,\cdot)$ on $V$ is super-symmetric if $(x,y)=(-1)^{|x||y|}(y,x)$ for any $x,y\in V$; consistent if $(V_{\bar 0},V_{\bar 1})=0$. Throughout this paper, we always assume that $(\cdot,\cdot)$ is consistent, that is,
\begin{equation}\label{eq 2.1.1}
   (x,y)=0,\quad \text{if}\ |x|\neq |y|.
\end{equation}

For a finite dimensional super vector space $V$, let $\{e_1,\ldots,e_m\}$ of $V_{\bar 0}$ be a basis of $V_{\bar 0}$ and $\{e_{m+1},\ldots,e_{m+n}\}$ a basis of $V_{\bar 1}$. Corresponding to the homogeneous basis $\{e_1,\ldots,e_{m+n}\}$ of $V$, the matrix of an endomorphism $T$ on $V$ is the form $\left(\begin{array}{cc}
\alpha& \beta\\
\gamma& \delta\\
\end{array}\right)$, where $\alpha$ is an $(m\times m)$-, $\beta$ an $(m\times n)$-, $\gamma$ an $(n\times m)$-, and $\delta$ an $(n\times n)$-matrix. Define \emph{the supertrace} $\str(T)$ of $T$ by $$\str(T)=\tr(\alpha)-\tr(\delta).$$ It is clear that $\str(T)$ is independent of the choice of a homogeneous basis.

%Let $B$ be a non-degenerate super-symmetric bilinear form on $V$. If $e^1,\ldots,e^{m+n}$ are the $B$-dual homogeneous %basis to $e_1,\ldots,e_{m+n}$ of $V$. Then $(e^i,e_j)=\delta_{ij}$ for $1\leq i,j\leq m+n$. It follows that
%\begin{equation}\label{eq 2.1.5}
%   \str(T)=\sum_{i=1}^{m}(e^i,T(e_i))-\sum_{i=m+1}^{m+n}(e^i,T(e_i))=\sum_{i=1}^{m+n}(T(e_i),e^i).
%\end{equation}

\begin{lemma}\label{lemma 2.1}
Let $V$ be a finite dimensional super vector space with a non-degenerate super-symmetric bilinear form $(\cdot,\cdot)$, let ${\mathcal A}$ be an associative algebra, and let $f,g: V\rightarrow {\mathcal A}$ be two linear mappings. Assume that $\{x_1,\ldots,x_n\}$ is a homogeneous basis of $V$ and $\{x^1,\ldots,x^n\}$ is the $(\cdot,\cdot)$-dual basis of $\{x_1,\ldots,x_n\}$. Then $\sum_{i=1}^n f(x_i)g(x^i)$ is independent of the choice of basis. In particular,
\begin{equation*}
\sum_{i=1}^n f(x_i)g(x^i)=\sum_{i=1}^n (-1)^{|x_i||x^i|}f(x^i)g(x_i).
\end{equation*}
\end{lemma}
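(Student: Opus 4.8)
The plan is to reduce everything to elementary matrix bookkeeping, the only delicate point being the signs produced by super-symmetry of $(\cdot,\cdot)$.

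First I would establish the basis-independence. Let $\{y_1,\dots,y_n\}$ be a second homogeneous basis, write $y_j=\sum_i a_{ij}x_i$ with $A=(a_{ij})$ a scalar matrix (necessarily block-diagonal with respect to the $\mathbb Z_2$-grading, since both bases are homogeneous), and let $\{y^1,\dots,y^n\}$ be the $(\cdot,\cdot)$-dual basis of $\{y_j\}$. Writing $y^k=\sum_l b_{lk}x^l$ and imposing $(y_j,y^k)=\delta_{jk}$ gives, since the $a_{ij},b_{lk}$ are ground-field scalars and $(x_i,x^l)=\delta_{il}$, the relation $A^{\mathrm T}B=I$, hence $B=(A^{-1})^{\mathrm T}$, so $y^k=\sum_l(A^{-1})_{kl}x^l$. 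Substituting,
$$\sum_k f(y_k)g(y^k)=\sum_{i,l}\Big(\sum_k a_{ik}(A^{-1})_{kl}\Big)f(x_i)g(x^l)=\sum_{i,l}\delta_{il}\,f(x_i)g(x^l)=\sum_i f(x_i)g(x^i).$$
I would emphasize here that no signs intervene: the change-of-basis coefficients are scalars, so they pass freely through $(\cdot,\cdot)$ and through multiplication in $\mathcal A$.

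For the ``in particular'' statement I would apply this independence to the homogeneous basis $\{x^1,\dots,x^n\}$. Its $(\cdot,\cdot)$-dual basis is $\{(-1)^{|x_i|}x_i\}_{i}$: consistency and super-symmetry give $|x^i|=|x_i|$ and $(x^i,x_j)=(-1)^{|x_i||x_j|}(x_j,x^i)=(-1)^{|x_i|}\delta_{ij}$, whence $(x^i,(-1)^{|x_j|}x_j)=\delta_{ij}$. Therefore
$$\sum_i f(x_i)g(x^i)=\sum_i f(x^i)g\big((-1)^{|x_i|}x_i\big)=\sum_i(-1)^{|x_i|}f(x^i)g(x_i)=\sum_i(-1)^{|x_i||x^i|}f(x^i)g(x_i),$$
using $|x^i|=|x_i|$ and $(-1)^{|x_i|}=(-1)^{|x_i|^2}$ in the last equality.

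The computations are entirely routine; the only thing to watch is that the sign in the final formula is genuinely forced by the super-symmetry of $(\cdot,\cdot)$ — it is exactly the factor $(-1)^{|x_i|}$ picked up when identifying the dual basis of $\{x^i\}$ — whereas the basis-independence step produces no signs at all. (Alternatively, one can phrase the lemma as saying that $\sum_i f(x_i)g(x^i)$ is the image, under $f\otimes g$ followed by the multiplication of $\mathcal A$, of the canonical element of $V\otimes V$ determined by $(\cdot,\cdot)$, which is basis-independent by construction; but the direct computation above is shorter and makes the sign in the second assertion transparent.)
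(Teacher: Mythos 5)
Your proof is correct and takes essentially the same route as the paper's: the basis-independence is the same change-of-basis computation (your $A^{\mathrm T}B=I$, $B=(A^{-1})^{\mathrm T}$ is the paper's $S^{\mathrm T}T=E_n$, $TS^{\mathrm T}=E_n$), and the ``in particular'' formula is obtained exactly as in the paper by applying that independence to the basis $\{x^1,\ldots,x^n\}$, whose dual is $\{(-1)^{|x_i|}x_i\}=\{(-1)^{|x_i||x^i|}x_i\}$.
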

\begin{proof}
Let $\{y_1,\ldots,y_n\}$ be another basis of $V$ and let $\{y^1,\ldots,y^n\}$ be the $(\cdot,\cdot)$-dual basis of $\{y_1,\ldots,y_n\}$. Let $S=(s_{ij})$ and $T=(t_{ij})$ be $n\times n$ matrices satisfying
$$ (y_1,\ldots,y_n)=(x_1,\ldots,x_n)S, \quad  (y^1,\ldots,y^n)=(x^1,\ldots,x^n)T,$$
that is, $y_i=\sum\limits_{j=1}^ns_{ji}x_j$ and $y^i=\sum\limits_{j=1}^nt_{ji}x^j$ for any $i=1,\ldots,n$. Since
\begin{equation*}
\delta_{ij}=(y^i,y_j)=(\sum_{k=1}^nt_{ki}x^k,\sum_{k=1}^ns_{kj}x_k)=\sum_{k=1}^ns_{kj}t_{ki},
\end{equation*}
we have that $S^TT=E_n$, which implies that $TS^T=E_n$, that is, $\sum\limits_{i=1}^nt_{ki}s_{li}=\delta_{kl}$. Here $E_n$ is the $n\times n$ identity matrix. Now,
\begin{equation*}
\sum_{i=1}^n f(y_i)g(y^i)=\sum_{i=1}^n\sum_{k=1}^n\sum_{l=1}^ns_{li}t_{ki}f(x_l)g(x^k)=\sum_{k=1}^n\sum_{l=1}^n\delta_{kl}f(x_l)g(x^k)=\sum_{i=1}^nf(x_i)g(x^i),
\end{equation*}
which implies that $\sum_{i=1}^n f(x_i)g(x^i)$ is independent of the choice of basis. The last statement follows from the fact that $\{(-1)^{|x_i||x^i|}x_i\}$ is the $(\cdot,\cdot)$-dual basis of $\{x_1,\ldots,x_n\}$.
\end{proof}

A \emph{superalgebra} is a super vector space $\mathcal{A}=\mathcal{A}_{\bar 0}+\mathcal{A}_{\bar 1}$ with a multiplication satisfying $\mathcal{A}_i\mathcal{A}_i\subset\mathcal{A}_{i+j}$ for any $i,j\in\mathbb{Z}_2$. For superalgebras $\mathcal{A}$ and $\mathcal{B}$, $\mathcal{A}\otimes\mathcal{B}$ is a superalgebra with the multiplication defined by
\begin{equation}\label{eq 2.1.2}
(x\otimes y)(x^\prime\otimes y^\prime)=(-1)^{|y||x^\prime|}xx^\prime\otimes yy^\prime.
\end{equation}

A \emph{Lie superalgebra} $\mathfrak{g}$ is a superalgebra $\mathfrak{g}=\mathfrak{g}_{\bar 0}\oplus\mathfrak{g}_{\bar 1}$ with a bracket $[\cdot,\cdot]$ satisfying
\begin{gather}
  [x,y]=-(-1)^{|y||x|}[y,x],      \label{eq 2.1.3}       \\
  [x,[y,z]]=[[x,y],z]+(-1)^{|y||x|}[y,[x,z]]. \label{eq 2.1.4}
\end{gather}
Here the identity (\ref{eq 2.1.3}) is the skew super-symmetry and the identity (\ref{eq 2.1.4}) is the super Jacobi identity. For more details on Lie superalgebras, see \cite{Ka}. The $\mathbb{Z}_2$-gradation of $V$ induces
\begin{equation*}
\End (V)=\End (V)_{\bar 0}\oplus\End (V)_{\bar 1},
\end{equation*}
where
\begin{equation*}
\End (V)_i=\{\xi\in\End (V)|\xi(V_j)\subset V_{i+j}\}
\end{equation*}
for any $i,j\in\mathbb{Z}_2$. It is easy to see that $\mathfrak{gl}(V)=\End (V)$ is a Lie superalgebra under the commutator defined by
\begin{equation*}
[\xi_1,\xi_2]=\xi_1\xi_2-(-1)^{|\xi_1||\xi_2|}\xi_2\xi_1,\quad \forall\xi_1,\xi_2\in\End (V).
\end{equation*}
It is called the \emph{general linear Lie superalgebra over $V$}. A \emph{representation} $\rho$ of a Lie superalgebra $\mathfrak{g}$ on $V$ is a homomorphism $\rho: \mathfrak{g}\rightarrow\mathfrak{gl}(V)$ of Lie superalgebras which preserves the grading. Note that the map $\ad:\mathfrak{g}\rightarrow\mathfrak{gl}(\mathfrak{g})$ is a representation of $\mathfrak{g}$, where $\ad x(y)=[x,y]$ for any $x,y\in\mathfrak{g}$. It is called the \emph{adjoint representation}.

Let $(\cdot,\cdot)$ be a non-degenerate super-symmetric bilinear form on $V$. Then
\begin{equation*}
\mathfrak {osp}(V)=\{\delta\in\End (V)|(\delta(x),y)+(-1)^{|\delta||x|}(x,\delta(y))=0\}
\end{equation*}
is a subalgebra of $\mathfrak{gl}(V)$, which is called the \emph{ortho-symplectic Lie superalgebra} over $V$ with respect to $(\cdot,\cdot)$. A bilinear form $(\cdot,\cdot)$ on a Lie superalgebra $\mathfrak{g}$ is called \emph{invariant} if
\begin{equation*}
([x,y],z)=(x,[y,z]), \quad\forall x,y,z\in\mathfrak{g}.
\end{equation*}
A Lie superalgebra $\mathfrak g$ together with a non-degenerate invariant super-symmetric bilinear form $(\cdot,\cdot)$ is called a \emph{quadratic Lie superalgebra}.
Let $\{x_1,\ldots,x_n\}$ be a basis of the quadratic Lie superalgebra ${\mathfrak g}$ and let $\{x^1,\ldots,x^n\}$ be the $(\cdot,\cdot)$-dual basis of $\{x_1,\ldots,x_n\}$. By Lemma \ref{lemma 2.1}, $$\Cas_{\mathfrak g}=\sum_{i=1}^n x_ix^i \in U({\mathfrak g})$$ is independent of the choice of basis. It is the Casimir element of ${\mathfrak g}$. Moreover, $\Cas_{\mathfrak g}$ belongs to the center $Z({\mathfrak g})$ of the enveloping algebra $U({\mathfrak g})$ of ${\mathfrak g}$.

\subsection{Clifford algebras and exterior algebras over super vector spaces} Let $V$ be a finite dimensional super vector space with a non-degenerate super-symmetric bilinear form $(\cdot,\cdot)$. This subsection is to recall some facts on the Clifford algebra and the exterior algebra over $V$. For more details on Clifford theory, see \cite{CK,Ko1,Me}.

Let $T(V)$ be the tensor algebra over $V$. Denote by $I_C(V)$ (resp. $I_\Lambda(V)$) the ideal in $T(V)$ generated by all elements of the form, for any $x,y\in V$,  $$x\otimes y+(-1)^{|x||y|}y\otimes x-2(x,y)\quad (\textrm{resp. } x\otimes y+(-1)^{|x||y|}y\otimes x).$$ Then we have the \emph{Clifford algebra} $C(V)=T(V)/I_C(V)$ (resp. the \emph{exterior algebra} $\Lambda(V)=T(V)/I_\Lambda(V)$). Composing the canonical injection $V\rightarrow T(V)$ with the quotient mapping $\pi_C:T(V)\rightarrow C(V)$ (resp. $\pi_\Lambda:T(V)\rightarrow \Lambda(V)$), we obtain the canonical mapping $$\zeta_{C}:V\rightarrow C(V)\quad (\textrm{resp. } \zeta_\Lambda:V\rightarrow \Lambda(V)).$$ Moreover, we may identify $V$ with $\zeta_{C}(V)$ (resp. $\zeta_\Lambda(V)$) so that $C(V)$ (resp. $\Lambda(V)$) is the algebra generated  by $V$ with the relation $$xy+(-1)^{|x||y|}yx=2(x,y) \quad (\textrm{ resp. } x\wedge y+(-1)^{|x||y|}y\wedge x=0),$$ where $xy$ (resp. $x\wedge y$) is the Clifford multiplication of $C(V)$ (resp. the exterior multiplication of $\Lambda(V)$) for any $x,y\in V$.
The pair $(C(V),\zeta _{C})$ (resp. $(\Lambda(V),\zeta _{C})$) has the following standard universal mapping property.
\begin{proposition}\label{prop 2.2}
Assume that  ${\mathcal A}$ is an associative algebra with the unity element $1_{\mathcal A}$ and $\phi :V\rightarrow {\mathcal A}$ is a linear mapping such that
\begin{equation*}
\phi (x)\phi(y)+(-1)^{|x||y|}\phi (y)\phi (x)=2(x,y)1_{\mathcal A}\quad (resp.\ \phi (x)\phi(y)+(-1)^{|x||y|}\phi (y)\phi (x)=0)
\end{equation*}
for any $x,y\in V$. Then $\phi$ extends uniquely to an algebra homomorphism $\phi _{C} :C(V)\rightarrow {\mathcal A}$ (resp. $\phi _{\Lambda} :\Lambda(V)\rightarrow {\mathcal A}$).
\end{proposition}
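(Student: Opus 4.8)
The plan is to derive this from the universal mapping property of the tensor algebra $T(V)$. Since $\phi\colon V\to\mathcal A$ is a linear map into an associative algebra with unit, there is a unique algebra homomorphism $\tilde\phi\colon T(V)\to\mathcal A$ with $\tilde\phi(1)=1_{\mathcal A}$ and $\tilde\phi|_V=\phi$; explicitly, $\tilde\phi(v_1\otimes\cdots\otimes v_k)=\phi(v_1)\cdots\phi(v_k)$. The whole argument then consists of showing that $\tilde\phi$ descends to the quotient, and that the descent is unique.

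The key step is to verify that $I_C(V)\subseteq\ker\tilde\phi$ in the Clifford case (resp. $I_\Lambda(V)\subseteq\ker\tilde\phi$ in the exterior case). Since $\ker\tilde\phi$ is a two-sided ideal of $T(V)$, it is enough to check that $\tilde\phi$ annihilates the chosen generators of $I_C(V)$, namely $x\otimes y+(-1)^{|x||y|}y\otimes x-2(x,y)$ for homogeneous $x,y\in V$ (resp. $x\otimes y+(-1)^{|x||y|}y\otimes x$). This is immediate from the hypothesis:
\begin{equation*}
\tilde\phi\bigl(x\otimes y+(-1)^{|x||y|}y\otimes x-2(x,y)\bigr)=\phi(x)\phi(y)+(-1)^{|x||y|}\phi(y)\phi(x)-2(x,y)1_{\mathcal A}=0,
\end{equation*}
and similarly $\tilde\phi(x\otimes y+(-1)^{|x||y|}y\otimes x)=0$ in the exterior case. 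Consequently $\tilde\phi$ factors through $\pi_C\colon T(V)\to C(V)=T(V)/I_C(V)$, giving an algebra homomorphism $\phi_C\colon C(V)\to\mathcal A$ with $\phi_C\circ\zeta_C=\phi$, and analogously $\phi_\Lambda$ in the exterior case. For uniqueness, note that $C(V)$ is generated as an algebra by $\zeta_C(V)$ together with $1$; hence any two algebra homomorphisms $C(V)\to\mathcal A$ agreeing on $\zeta_C(V)$ must coincide, so $\phi_C$ is the only homomorphic extension of $\phi$, and likewise for $\phi_\Lambda$.

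I do not expect a genuine obstacle: this is the standard universal property, and the $\mathbb Z_2$-structure enters only through the signs already incorporated into the generators of $I_C(V)$ and $I_\Lambda(V)$. The one point worth stating carefully is that the defining relation, imposed only for homogeneous $x,y$, suffices for the whole ideal because both sides of the relation are bilinear in $x$ and $y$, so the homogeneous elements above already generate $I_C(V)$ (resp. $I_\Lambda(V)$) as a two-sided ideal; no additional sign bookkeeping for non-homogeneous arguments is needed.
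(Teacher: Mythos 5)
Your proof is correct and is precisely the standard argument: extend $\phi$ to $T(V)$ by the universal property of the tensor algebra, check that the (homogeneous) generators of $I_C(V)$ (resp.\ $I_\Lambda(V)$) lie in the kernel, and conclude by the universal property of the quotient plus the fact that $C(V)$ (resp.\ $\Lambda(V)$) is generated by $V$ and $1$. The paper states this proposition without proof as a standard fact, so there is nothing to compare beyond noting that your argument is the one being implicitly invoked.
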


It is well-known that $T(V)$ has a natural ${\mathbb Z}\times {\mathbb Z}_2$-gradation. The degree of $x_1\otimes\cdots\otimes x_n$ is equal to $(n, |x_1|+\cdots+|x_n|)$. Since $C(V)$ and $\Lambda(V)$ inherit the ${\mathbb Z}_2$-gradation from $T(V)$, we still denote by $|u|$ the parity of a homogeneous element $u$ in $T(V)$ (resp. $C(V)$, $\Lambda(V)$). The ${\mathbb Z}$-gradation of $T(V)$ induces a ${\mathbb Z}$-gradation of $\Lambda(V)$, but only induces a ${\mathbb Z}_2$-gradation of $C(V)$.

Denote by $T^n_a(V)$ the subspace spanned by the elements of degree $(n,a)$ in $T(V)$. Then
\begin{equation*}
T(V)=\bigoplus_{n\in{\mathbb Z},a\in{\mathbb Z}_2} T^n_a(V).
\end{equation*}
Set $T^n(V)=\sum_{a\in{\mathbb Z}_2} T^n_a(V)$ and $T_a(V)=\sum_{n\in{\mathbb Z}}T^n_a(V)$.
\begin{definition}\label{def 2.3}
A linear mapping $D:T(V)\rightarrow T(V)$ is called a \emph{derivation of degree $(k,d)$} if
\begin{enumerate}
\item[(i)] $D(T^n(V))\subset T^{k+n}(V)$ and $D(T_a(V))\subset T_{a+d}(V)$,
\item[(ii)] $D(u\otimes v)=D(u)\otimes v+(-1)^{kn}(-1)^{da}u\otimes D(v), \forall u\in T^n_a(V),v\in T(V)$.
\end{enumerate}
\end{definition}
Similarly, one can define the derivation of $C(V)$ and $\Lambda(V)$. If $D_T$ is a derivation of $T(V)$ which stabilizes both $I_C(V)$ and $I_\Lambda(V)$, then $D_T$ descends a derivation $D_C$ of $C(V)$ and a derivation $D_\Lambda$ of $\Lambda(V)$.

For any homogeneous element $x\in V$, there is a unique derivation $\iota_T (x)$ of $T(V)$ such that $\iota_T (x)(y)=(x,y)$ for any $y\in V$. Explicitly,
\begin{equation*}
 \iota_T (x)(x_1\otimes\cdots\otimes x_n)\\
=\sum_{k=1}^n(-1)^{k-1}(-1)^{|x|(|x_0|+\cdots +|x_{k-1}|)}(x,x_k)x_1\otimes\cdots\otimes\widehat{x_k}\otimes\cdots\otimes x_n,
\end{equation*}
where $x_1,\ldots,x_n\in V$ and $|x_0|={\bar 0}$.
Clearly, $\iota_T (x)$ is a derivation of degree $(-1,|x|)$ by the identity (\ref{eq 2.1.1}). By Proposition 4.5 in \cite{CK}, $\iota_T (x)$ stabilizes both $I_C(V)$ and $I_\Lambda(V)$. Then $\iota_T (x)$ descends to derivations $\iota_{C} (x)$ and $\iota_{\Lambda}(x)$ of $C(V)$ and $\Lambda(V)$, respectively.

For any $x\in V$, let $\epsilon_{\Lambda}(x)$ be the left exterior multiplication operator by $x$ on $\Lambda(V)$. By Proposition 4.6 in \cite{CK}, we have
\begin{gather}
\epsilon_{\Lambda} (x)\epsilon_{\Lambda}(y)+(-1)^{|x||y|}\epsilon_{\Lambda}(y)\epsilon_{\Lambda} (x)=0, \label{eq 2.2.1}\\
\iota_{\Lambda} (x)\iota_{\Lambda}(y)+(-1)^{|x||y|}\iota_{\Lambda}(y)\iota_{\Lambda} (x)=0, \label{eq 2.2.2}\\
\iota_{\Lambda} (x)\epsilon_{\Lambda}(y)+(-1)^{|x||y|}\epsilon_{\Lambda}(y)\iota_{\Lambda} (x)=(x,y).           \label{eq 2.2.3}
\end{gather}
Set $\gamma (x)=\epsilon_{\Lambda}(x)+\iota_{\Lambda} (x)$. Then
\begin{equation}\label{eq 2.2.4}
\gamma (x)\gamma (y)+(-1)^{|x||y|}\gamma (y)\gamma (x)=2(x,y).
\end{equation}
The linear map $V\rightarrow \End (\Lambda(V))$ defined by
$x\mapsto\gamma (x)$ naturally extends to a homomorphism
$T(V)\rightarrow \End (\Lambda(V))$, which, by the identity (\ref{eq 2.2.4}), descends to a homomorphism
\begin{equation*}
\gamma : C(V)\rightarrow \End (\Lambda(V)).
\end{equation*}
The homomorphism $\gamma$ defines a $C(V)$-module structure on $\Lambda(V)$. Let $\eta : C(V)\rightarrow \Lambda(V)$ be the linear map defined by
\begin{equation*}
\eta(u)=\gamma(u)1_{\Lambda(V)},
\end{equation*}
where $1_{\Lambda(V)}$ is the unity element of $\Lambda(V)$.

Define the \emph{skew super symmetrization map} $s:\Lambda(V)\rightarrow T(V)$ by
\begin{equation*}
s(x_1\wedge x_2\wedge\cdots\wedge x_n)=\frac{1}{n!}\sum_{\sigma\in S_n}(-1)^{N_\sigma(x_1,\ldots,x_n)}\sgn(\sigma)x_{\sigma (1)}\otimes x_{\sigma
(2)}\otimes\cdots\otimes x_{\sigma (n)}
\end{equation*}
for homogeneous elements $x_1,\ldots,x_n\in V$. Here $\sgn(\sigma)$ denotes the signature of the permutation $\sigma$, and $N_\sigma(x_1,\ldots,x_n)$ is the number of pairs $i<j$ such that $x_i, x_j$ are odd elements and $\sigma^{-1}(i)>\sigma^{-1}(j)$. Let $\Sigma(V)=s(\Lambda(V))$ be the space of skew super-symmetric tensors. Set $\hat{\pi}_C=\pi_C|_{\Sigma(V)}$ and $\hat{\pi}_\Lambda=\pi_\Lambda|_{\Sigma(V)}$.
\begin{lemma}[\cite{CK}, Theorem 4.8]\label{lemma 2.4}
The map $\eta$ is bijective. Moreover, one has
\begin{equation}%\label{eq 2.2.5-1}
\eta\circ\hat{\pi}_C=\hat{\pi}_\Lambda,
\end{equation}
that is, the following diagram commutes.
\center $\xymatrix{
    & \Sigma(v)\ar[dl]_{\hat{\pi}_C}\ar[dr]^{\hat{\pi}_\Lambda} &\\
 C(V)\ar[rr]^{\eta} & &\Lambda(V)
}$
\end{lemma}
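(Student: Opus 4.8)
The plan is to construct an explicit two-sided inverse of $\eta$; this settles the bijectivity and the commutativity of the triangle simultaneously. Set $q:=\hat\pi_C\circ s\colon\Lambda(V)\to C(V)$, the (anti-)symmetrization map. Since $s$ is a section of $\pi_\Lambda$ (this is what the signs $(-1)^{N_\sigma}$ are designed to ensure), one has $\hat\pi_\Lambda\circ s=\mathrm{id}_{\Lambda(V)}$, so the asserted identity $\eta\circ\hat\pi_C=\hat\pi_\Lambda$ is equivalent to the single equation $\eta\circ q=\mathrm{id}_{\Lambda(V)}$. The goal therefore splits into (i) proving $\eta\circ q=\mathrm{id}_{\Lambda(V)}$ and (ii) proving that $q$ is surjective: granting both, $q$ is bijective with inverse $\eta$, and in particular $\eta$ is bijective.

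For (ii) I would use the filtration $C^{\leq n}(V):=\pi_C\big(\bigoplus_{k=0}^{n}T^k(V)\big)$ of $C(V)$. The defining relation $x_ix_j+(-1)^{|x_i||x_j|}x_jx_i=2(x_i,x_j)$ shows that transposing two adjacent Clifford factors changes a monomial of length $n$ only by a scalar multiple of a monomial of length $n-2$; hence $x_1\cdots x_n$ is congruent modulo $C^{\leq n-1}(V)$ to the antisymmetrized element
\[
q(x_1\wedge\cdots\wedge x_n)=\frac{1}{n!}\sum_{\sigma\in S_n}(-1)^{N_\sigma(x_1,\ldots,x_n)}\sgn(\sigma)\,x_{\sigma(1)}\cdots x_{\sigma(n)}.
\]
Since $C(V)$ is generated by $V$, an induction on $n$ then gives $C(V)=q(\Lambda(V))$.

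For (i) I would first prove the recursion
\[
q(v\wedge\omega)=v\cdot q(\omega)-q\big(\iota_\Lambda(v)\,\omega\big),\qquad v\in V\ \text{homogeneous},\ \omega\in\Lambda(V),
\]
where $v\cdot q(\omega)$ denotes the Clifford product; this is a direct computation with the antisymmetrization, rewriting each Clifford monomial so as to move the factor $v$ to the front via the relation above and checking that the scalar corrections reassemble, with the correct $(-1)^{N_\sigma}$-signs, into $-q(\iota_\Lambda(v)\omega)$. Granting the recursion, (i) follows by induction on the $\mathbb{Z}$-degree of $\omega$, reduced by linearity to $\omega=v\wedge\omega'$ with $v$ homogeneous. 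Since $\gamma\colon C(V)\to\End(\Lambda(V))$ is an algebra homomorphism with $\gamma(v)=\epsilon_\Lambda(v)+\iota_\Lambda(v)$, we have $\eta(v\cdot u)=\gamma(v)\,\eta(u)=v\wedge\eta(u)+\iota_\Lambda(v)\,\eta(u)$; combining this with the recursion and the inductive hypothesis applied to $\omega'$ and to $\iota_\Lambda(v)\omega'$ (both of smaller degree),
\[
\eta\big(q(v\wedge\omega')\big)=\eta\big(v\cdot q(\omega')\big)-\eta\big(q(\iota_\Lambda(v)\omega')\big)=\bigl(v\wedge\omega'+\iota_\Lambda(v)\omega'\bigr)-\iota_\Lambda(v)\omega'=v\wedge\omega',
\]
with the base case $\omega=1$ immediate from $\eta(1_{C(V)})=\gamma(1)1_{\Lambda(V)}=1_{\Lambda(V)}$.

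The step I expect to be the main obstacle is the verification of the recursion in (i): this is where all the $\mathbb{Z}_2$-sign subtleties peculiar to the super setting have to be controlled at once---the factors $(-1)^{N_\sigma}$ in the definition of $s$, the sign $(-1)^{|x||y|}$ in the super-symmetry of $(\cdot,\cdot)$ (which makes $(\cdot,\cdot)$ alternating on $V_{\bar 1}$, so that $\Lambda(V)$ and $C(V)$ contain a symmetric algebra and a Weyl algebra on the odd part), and the signs in the derivation rule for $\iota_\Lambda(v)$---and one must check that these combine exactly as in the classical non-super case. I note that the bijectivity of $\eta$ in isolation can be obtained more cheaply by a filtration argument: $\eta$ carries $C^{\leq n}(V)$ into $\bigoplus_{k\leq n}\Lambda^k(V)$ and induces on $C^{\leq n}(V)/C^{\leq n-1}(V)\to\Lambda^n(V)$ a two-sided inverse of the symbol map from (ii), hence an isomorphism, so $\eta$ is a filtered isomorphism; but this route does not by itself yield the exact identity $\eta\circ q=\mathrm{id}$ that the triangle requires, so the recursion (or an equivalent direct computation) is still needed.
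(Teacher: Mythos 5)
The paper does not actually prove this lemma --- it is quoted verbatim from \cite{CK}, Theorem 4.8 --- so there is no in-text argument to compare yours against; judged on its own terms, your plan is correct and is essentially the standard Chevalley quantization argument transported to the super setting. The reduction of $\eta\circ\hat{\pi}_C=\hat{\pi}_\Lambda$ to the single identity $\eta\circ q=\mathrm{id}_{\Lambda(V)}$ is legitimate because $\Sigma(V)=s(\Lambda(V))$ by definition, so both restricted projections are determined by their composites with $s$; the surjectivity of $q$ via the length filtration is sound, since the Clifford relation gives $x_{\sigma(1)}\cdots x_{\sigma(n)}\equiv(-1)^{N_\sigma(x_1,\ldots,x_n)}\sgn(\sigma)\,x_1\cdots x_n$ modulo $C^{\leq n-2}(V)$ (the factor $-(-1)^{|x_i||x_{i+1}|}$ produced by an adjacent Clifford transposition is exactly the factor by which the wedge monomial transforms, which is what makes $(-1)^{N_\sigma}\sgn(\sigma)$ multiplicative), whence $q(x_1\wedge\cdots\wedge x_n)\equiv x_1\cdots x_n$ modulo lower filtration; and the induction deducing $\eta\circ q=\mathrm{id}$ from the recursion is clean. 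The one step you leave unverified is the recursion $q(v\wedge\omega)=v\,q(\omega)-q(\iota_\Lambda(v)\omega)$ itself, and you are right that it is the crux; for the record it does hold with exactly the signs you wrote and no hidden super-corrections --- for instance, for $v,w_1,w_2$ all odd a direct computation gives $q(v\wedge w_1\wedge w_2)=vw_1w_2-(w_1,w_2)v-(v,w_1)w_2-(v,w_2)w_1$, which equals $v\,q(w_1\wedge w_2)-q(\iota_\Lambda(v)(w_1\wedge w_2))$ since $q(w_1\wedge w_2)=w_1w_2-(w_1,w_2)$ and $\iota_\Lambda(v)(w_1\wedge w_2)=(v,w_1)w_2+(v,w_2)w_1$. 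Two small points to make explicit in a full write-up: first, that $s$ is well defined on $\Lambda(V)$ (its formula is given only on decomposable wedges of homogeneous elements, and one must check that the super-antisymmetrization annihilates $I_\Lambda(V)$), since your whole reduction rests on $\hat{\pi}_\Lambda\circ s=\mathrm{id}$; second, the degree-one base of your induction, $\eta(q(v))=\gamma(v)1_{\Lambda(V)}=\epsilon_\Lambda(v)1+\iota_\Lambda(v)1=v$, which is immediate but should be recorded alongside $\omega=1$.
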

Using $\eta$, we may identify $C(V)$ with $\Lambda(V)$. There exist two multiplications on $\Lambda(V)$, that is, the exterior multiplication $u\wedge v$ and the Clifford multiplication $uv$.

Set $\Lambda^n(V)=\pi_{\Lambda}(T^n(V))$ and denote the component of $u$ in $\Lambda^n(V)$ by $(u)_n$ for any $u\in\Lambda(V)$. One can extends the bilinear form $(\cdot,\cdot)$ on $V$ to a non-degenerate bilinear form on $\Lambda(V)$, which is still denote by $(\cdot,\cdot)$,
\begin{equation*}
(u,v)=\left\{ \begin{array}{ll} (-1)^{\frac{n(n-1)}{2}}(uv)_0,&m=n,\\0,&m\neq n, \end{array}\right.
\end{equation*}
for any $u\in \Lambda^m(V)$ and $v\in \Lambda^n(V)$, where we identify $\Lambda^0(V)$ with $\mathbb C$. If $u=x_1\wedge\cdots\wedge x_n$ is an element in $\Lambda^n(V)$, then
\begin{equation*}
(u,v)=(-1)^\frac{n(n-1)}{2}\iota_\Lambda (x_1)\cdots\iota_\Lambda (x_n)v
\end{equation*}
for any $v\in \Lambda^n(V)$. Moreover, we have
\begin{lemma}[\cite{CK}, Theorem 5.4]\label{lemma 2.5} Let $x\in V$ and $u,v\in \Lambda(V)$. Then
\begin{enumerate}
\item[(i)] $(u,v)=(-1)^{|u||v|}(v,u)$.
\item[(ii)] $(\epsilon_\Lambda(x)u,v)=(-1)^{|x||u|}(u,\iota_\Lambda(x)v)$.
\item[(iii)] $(\iota_\Lambda(x)u,v)=(-1)^{|x||u|}(u,\epsilon_\Lambda(x)v)$.
\end{enumerate}
\end{lemma}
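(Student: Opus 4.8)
The plan is to reduce each of (i)--(iii) to the case in which $u=x_1\wedge\cdots\wedge x_m$ and $v=y_1\wedge\cdots\wedge y_n$ are homogeneous decomposable elements, since $\Lambda^k(V)$ is spanned by such monomials, each side is bilinear in $(u,v)$, and the form vanishes on $\Lambda^m(V)\times\Lambda^n(V)$ for $m\neq n$, so only matching degrees need attention. The only tools required are the explicit contraction formula $(u,v)=(-1)^{\frac{m(m-1)}{2}}\iota_\Lambda(x_1)\cdots\iota_\Lambda(x_m)v$ when $u=x_1\wedge\cdots\wedge x_m$, the relations (\ref{eq 2.2.1})--(\ref{eq 2.2.3}), the Leibniz rule for $\iota_\Lambda(x)$ on $\Lambda(V)$, the super-symmetry of $(\cdot,\cdot)$ on $V$, and the elementary identity $\frac{(m+1)m}{2}=\frac{m(m-1)}{2}+m$. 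I would begin with (ii): with $v\in\Lambda^{m+1}(V)$ (otherwise both sides vanish) and $\epsilon_\Lambda(x)u=x\wedge u$, the contraction formula turns $(\epsilon_\Lambda(x)u,v)$ into $(-1)^{\frac{(m+1)m}{2}}\iota_\Lambda(x)\iota_\Lambda(x_1)\cdots\iota_\Lambda(x_m)v$ and $(u,\iota_\Lambda(x)v)$ into $(-1)^{\frac{m(m-1)}{2}}\iota_\Lambda(x_1)\cdots\iota_\Lambda(x_m)\iota_\Lambda(x)v$; moving $\iota_\Lambda(x)$ to the far left across the $m$ operators $\iota_\Lambda(x_i)$ via (\ref{eq 2.2.2}) contributes the factor $(-1)^m(-1)^{|x||u|}$, and $(-1)^{\frac{(m+1)m}{2}}=(-1)^m(-1)^{\frac{m(m-1)}{2}}$ then yields (ii) at once.

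For (iii), take $u=z_1\wedge\cdots\wedge z_{m+1}$ and $v\in\Lambda^m(V)$. Expanding $\iota_\Lambda(x)u$ by the Leibniz rule and pairing each term with $v$ writes $(\iota_\Lambda(x)u,v)$ as a sum over $k$ of explicit multiples of $\iota_\Lambda(z_1)\cdots\widehat{\iota_\Lambda(z_k)}\cdots\iota_\Lambda(z_{m+1})v$. On the other side $(u,\epsilon_\Lambda(x)v)=(-1)^{\frac{(m+1)m}{2}}\iota_\Lambda(z_1)\cdots\iota_\Lambda(z_{m+1})(x\wedge v)$, and I would commute $\epsilon_\Lambda(x)$ leftward through all of the $\iota_\Lambda(z_j)$ using (\ref{eq 2.2.3}); the completely-commuted remainder is a multiple of $\iota_\Lambda(z_1)\cdots\iota_\Lambda(z_{m+1})v$, which is zero because $v$ is over-contracted, so only the $m+1$ terminating terms survive, and they are exactly the terms appearing on the first side. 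Equating coefficients reduces, after using $(z_k,x)=(-1)^{|z_k||x|}(x,z_k)$ and the same binomial identity, to $(-1)^{m+1-k}(-1)^m=(-1)^{k-1}$, which holds.

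Finally, (i) is proved by induction on the common degree $n$. The cases $n=0$ and $n=1$ are immediate from the definition and the super-symmetry of $(\cdot,\cdot)$ on $V$. For the inductive step write $u=x_1\wedge u'$ with $u'\in\Lambda^{n-1}(V)$; then by (ii), $(u,v)=(\epsilon_\Lambda(x_1)u',v)=(-1)^{|x_1||u'|}(u',\iota_\Lambda(x_1)v)$; the induction hypothesis in degree $n-1$ (which applies since $u'$ and $\iota_\Lambda(x_1)v$ both lie in $\Lambda^{n-1}(V)$ and the latter is homogeneous of parity $|x_1|+|v|$ by consistency of the form) swaps $u'$ and $\iota_\Lambda(x_1)v$; and (iii) rewrites $(\iota_\Lambda(x_1)v,u')$ as a scalar multiple of $(v,\epsilon_\Lambda(x_1)u')=(v,u)$. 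Collecting the parity factors and using $|u|=|x_1|+|u'|$ gives $(u,v)=(-1)^{|u||v|}(v,u)$.

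I expect the sign bookkeeping in (iii) — keeping the Koszul signs $(-1)^{k-1}$ coming from the Leibniz rule and from the commutation recursion consistent with the parity signs $(-1)^{|x||\cdot|}$ — to be the only real obstacle; everything else is routine once the reduction to decomposable monomials has been carried out. As an alternative route one may instead prove (i) first, by recognizing $(u,v)$ as a super-determinant in the numbers $(x_i,y_j)$ and combining $(x_i,y_j)=(-1)^{|x_i||y_j|}(y_j,x_i)$ with the reindexing $\sigma\mapsto\sigma^{-1}$ over $S_n$; then (iii) follows from (i) together with (ii) by a one-line parity manipulation, with no recursion needed.
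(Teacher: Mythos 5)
Your proof is correct. Note first that the paper does not actually prove this lemma: it is imported verbatim as Theorem 5.4 of \cite{CK}, so there is no internal argument to compare against, and what you have written is a self-contained substitute built only from the contraction formula $(x_1\wedge\cdots\wedge x_n,v)=(-1)^{n(n-1)/2}\iota_\Lambda(x_1)\cdots\iota_\Lambda(x_n)v$ and the relations (\ref{eq 2.2.2}), (\ref{eq 2.2.3}). The sign bookkeeping you flagged as the delicate point does close up. In (ii), $(-1)^{\frac{(m+1)m}{2}}=(-1)^m(-1)^{\frac{m(m-1)}{2}}$ cancels the $(-1)^m$ produced by anticommuting $\iota_\Lambda(x)$ past the $m$ contractions, leaving exactly $(-1)^{|x||u|}$. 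In (iii), the term of $(u,\epsilon_\Lambda(x)v)$ in which $\epsilon_\Lambda(x)$ is absorbed at position $k$ carries $(-1)^{m+1-k}(-1)^{|x|(|z_{k+1}|+\cdots+|z_{m+1}|)}(z_k,x)$; multiplying by the prefactor $(-1)^{|x||u|}$ and using $(z_k,x)=(-1)^{|x||z_k|}(x,z_k)$ converts the parity exponent into $|x|(|z_1|+\cdots+|z_{k-1}|)$, matching the Leibniz expansion of $\iota_\Lambda(x)u$, and the residual numerical identity $(-1)^{m}(-1)^{m+1-k}=(-1)^{k-1}$ holds; the fully commuted remainder involving $\iota_\Lambda(z_1)\cdots\iota_\Lambda(z_{m+1})v$ vanishes for degree reasons, as you say. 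The induction for (i) then closes: with $|u|=|x_1|+|u'|$ one gets $(-1)^{|x_1||u'|}\cdot(-1)^{|u'|(|x_1|+|v|)}\cdot(-1)^{|x_1||v|}=(-1)^{|u||v|}$. Two points worth making explicit in a final write-up: the extended form on $\Lambda(V)$ is itself consistent (if $|u|\neq|v|$ then $uv$ is odd, so $(uv)_0=0$), which licenses the reduction to parity-homogeneous elements throughout; and in the inductive step of (i) the element $\iota_\Lambda(x_1)v$ need not be decomposable, which is harmless since statement (i) in degree $n-1$ is bilinear.
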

Every $u\in\Lambda^2(V)$ defines an operator $\ad u$ on $\Lambda(V)$:
\begin{equation*}
\ad u(v)=[u,v]_C=uv-(-1)^{|u||v|}vu,\quad\forall v\in\Lambda(V).
\end{equation*}
It is proved in \cite{CK} that $\ad u$ is a derivation of degree $(0, |u|)$ of $\Lambda(V)$. Moreover, we have
\begin{equation}\label{eq 2.2.4-1}
\ad u(z)=-2(-1)^{|u||z|}\iota_{\Lambda}(z)u
\end{equation}
for any $z\in V$ and
\begin{equation}\label{eq 2.2.4-2}
(\ad u(v_1),v_2)+(-1)^{|u||v_1|}(v_1,\ad u(v_2))=0
\end{equation}
for any $v_1,v_2\in\Lambda(V)$. Now define a map
\begin{equation*}
A:\ \Lambda^2(V)\rightarrow\mathfrak{osp} (V,\varepsilon)
\end{equation*}
by $A(u)=(\ad u)|_V$.
\begin{lemma}[\cite{CK}, Theorem 6.3]\label{lemma 2.6}
$\Lambda^2(V)$ is a Lie superalgebra under the commutator
\begin{equation*}
[u,v]_C=uv-(-1)^{|u||v|}vu,
\end{equation*}
and the map $A:\ \Lambda^2(V)\rightarrow\mathfrak{osp} (V,\varepsilon)$ is an isomorphism.
\end{lemma}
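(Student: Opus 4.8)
The plan is to establish the two assertions separately. For the Lie superalgebra structure on $\Lambda^2(V)$: since $C(V)$ is an associative superalgebra, it is automatically a Lie superalgebra under the supercommutator $[u,v]_C=uv-(-1)^{|u||v|}vu$, so it suffices to check that $\Lambda^2(V)$, viewed inside $C(V)$ via the identification $\eta$ of Lemma~\ref{lemma 2.4}, is a sub-Lie-superalgebra, i.e. is closed under $[\cdot,\cdot]_C$. But for $u,v\in\Lambda^2(V)$ one has $[u,v]_C=\ad u(v)$ by definition, and $\ad u$ is a derivation of $\Lambda(V)$ of degree $(0,|u|)$, hence preserves the $\mathbb Z$-grading; therefore $\ad u(v)\in\Lambda^2(V)$. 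Skew super-symmetry and the super Jacobi identity are then inherited from $C(V)$, so $(\Lambda^2(V),[\cdot,\cdot]_C)$ is a Lie superalgebra.

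Next I would verify that $A$ is a well-defined even map into $\mathfrak{osp}(V)$. For $u\in\Lambda^2(V)$ and $z\in V$, equation (\ref{eq 2.2.4-1}) gives $\ad u(z)=-2(-1)^{|u||z|}\iota_{\Lambda}(z)u\in\Lambda^1(V)=V$, so $A(u)=(\ad u)|_V\in\End(V)$, of parity $|u|$; and (\ref{eq 2.2.4-2}) applied to $v_1,v_2\in V$ is exactly the relation $(\ad u(v_1),v_2)+(-1)^{|u||v_1|}(v_1,\ad u(v_2))=0$ placing $A(u)$ in $\mathfrak{osp}(V)$. That $A$ is a homomorphism follows from the super Jacobi identity in $C(V)$, which reads $\ad([u,v]_C)=\ad u\circ\ad v-(-1)^{|u||v|}\ad v\circ\ad u$ as operators on $\Lambda(V)$; restricting this to the subspace $V$, which is invariant under $\ad u$ and $\ad v$, yields $A([u,v]_C)=A(u)A(v)-(-1)^{|u||v|}A(v)A(u)=[A(u),A(v)]$.

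It remains to see that $A$ is bijective. For injectivity, suppose $A(u)=0$; then $\iota_{\Lambda}(z)u=0$ for all $z\in V$ by (\ref{eq 2.2.4-1}), so the description of the induced form on $\Lambda(V)$ gives $(w\wedge z,u)=-\iota_{\Lambda}(w)\iota_{\Lambda}(z)u=0$ for all $w,z\in V$; since the elements $w\wedge z$ span $\Lambda^2(V)$ and $(\cdot,\cdot)$ is non-degenerate there, $u=0$. For surjectivity I would argue by dimension: writing $\dim V_{\bar 0}=m$ and $\dim V_{\bar 1}=n$, consistency and super-symmetry force $(\cdot,\cdot)|_{V_{\bar 1}}$ to be antisymmetric, hence symplectic (so $n$ is even), and super-antisymmetry of $\wedge$ gives $\dim\Lambda^2(V)=\binom{m}{2}+mn+\binom{n+1}{2}=\dim\mathfrak{so}(m)+mn+\dim\mathfrak{sp}(n)=\dim\mathfrak{osp}(V)$; together with injectivity this shows $A$ is an isomorphism. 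Alternatively, surjectivity can be exhibited constructively: for $\delta\in\mathfrak{osp}(V)$ a suitable scalar multiple of $\sum_i\delta(x_i)\wedge x^i\in\Lambda^2(V)$, with the appropriate signs, is a preimage, as one checks using Lemma~\ref{lemma 2.1} and (\ref{eq 2.2.4-1}).

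Most of the machinery is already in place from the preliminaries---the identification $C(V)\cong\Lambda(V)$, the fact that $\ad u$ is a degree-$(0,|u|)$ derivation, the adjunction identities (\ref{eq 2.2.1})--(\ref{eq 2.2.3}), and the non-degeneracy of the induced pairing---so the argument is largely assembly. The one genuinely delicate point is keeping the sign factors $(-1)^{|x||y|}$ consistent throughout, in particular when verifying the explicit preimage formula for surjectivity, or, if one prefers the dimension count, when correctly pinning down $\mathfrak{osp}(V)$---especially the symplectic nature of the restriction of the form to $V_{\bar 1}$, on which the equality of dimensions depends.
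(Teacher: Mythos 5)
Your argument is correct, but note that the paper itself offers no proof of this statement: Lemma~\ref{lemma 2.6} is imported verbatim from [CK, Theorem~6.3], so there is nothing in the present text to compare against. Your reconstruction leans only on facts the paper does record from [CK] --- that $\ad u$ is a derivation of $\Lambda(V)$ of degree $(0,|u|)$ (which gives closure of $\Lambda^2(V)$ under $[\cdot,\cdot]_C$ and, via (\ref{eq 2.2.4-1}), that $A(u)\in\End(V)$), the identity (\ref{eq 2.2.4-2}) (which places $A(u)$ in $\mathfrak{osp}(V)$), and the non-degeneracy of the induced pairing on $\Lambda^2(V)$ (which gives injectivity) --- and the dimension count $\binom{m}{2}+mn+\binom{n+1}{2}=\dim\mathfrak{osp}(V)$ is right, since super-symmetry plus consistency indeed force the form on $V_{\bar 1}$ to be symplectic and the wedge square of $V_{\bar 1}$ to be the symmetric square. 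The only cosmetic point is that the target is written $\mathfrak{osp}(V,\varepsilon)$ in the lemma (notation carried over from [CK]) but is the same algebra $\mathfrak{osp}(V)$ defined in Section~2; your proof is a valid self-contained substitute for the citation.
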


Let $\xi_T\in\End (T(V))$. Suppose that both $I_{\Lambda}(V)$ and $I_{C}(V)$ are
stable under $\xi_T$. Then $\xi_T$ descends to a map $\xi_{\Lambda}$ (resp.
$\xi_{C}$) of $\Lambda(V)$ (resp. $C(V)$).
\begin{lemma}[\cite{CK}, Lemma 4.10]\label{lemma 2.7}
If $\Sigma(V)$ is stable under $\xi_T$, then $\eta\circ\xi_{C}=\xi_{\Lambda}\circ\eta$ on $C(V)$.
\end{lemma}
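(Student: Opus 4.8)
The plan is a short diagram chase resting on Lemma~\ref{lemma 2.4}, the point being that ``$\xi_T$ descends'' simply means it commutes with the quotient maps $\pi_C$ and $\pi_\Lambda$. The one auxiliary fact needed is that $\hat\pi_\Lambda$ and $\hat\pi_C$ are linear bijections from $\Sigma(V)$ onto $\Lambda(V)$ and $C(V)$ respectively. For $\hat\pi_\Lambda$ this comes from $\pi_\Lambda\circ s=\mathrm{id}_{\Lambda(V)}$, a routine verification from the definitions (the integers $N_\sigma$ are chosen precisely so that every summand of $\pi_\Lambda(s(x_1\wedge\cdots\wedge x_n))$ reduces to $x_1\wedge\cdots\wedge x_n$): thus $s$ is an injective right inverse of the surjection $\hat\pi_\Lambda=\pi_\Lambda|_{\Sigma(V)}$, and since $\Sigma(V)=s(\Lambda(V))$ one also gets $s\circ\hat\pi_\Lambda=\mathrm{id}_{\Sigma(V)}$, so $\hat\pi_\Lambda$ is bijective with inverse $s$. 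As $\eta$ is bijective and $\eta\circ\hat\pi_C=\hat\pi_\Lambda$ by Lemma~\ref{lemma 2.4}, the map $\hat\pi_C=\eta^{-1}\circ\hat\pi_\Lambda$ is bijective too.

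Then I would argue pointwise. Fix $u\in C(V)$ and let $w\in\Sigma(V)$ be the unique skew super-symmetric tensor with $u=\hat\pi_C(w)=\pi_C(w)$. Since $\Sigma(V)$ is $\xi_T$-stable, $\xi_T(w)\in\Sigma(V)$; since $\xi_C$ is the endomorphism of $C(V)=T(V)/I_C(V)$ induced by $\xi_T$,
\begin{equation*}
\xi_C(u)=\xi_C(\pi_C(w))=\pi_C(\xi_T(w))=\hat\pi_C(\xi_T(w)),
\end{equation*}
and applying $\eta$ together with $\eta\circ\hat\pi_C=\hat\pi_\Lambda$,
\begin{equation*}
\eta(\xi_C(u))=\hat\pi_\Lambda(\xi_T(w))=\pi_\Lambda(\xi_T(w)).
\end{equation*}
On the other hand $\eta(u)=\eta(\hat\pi_C(w))=\hat\pi_\Lambda(w)=\pi_\Lambda(w)$, and since $\xi_\Lambda$ is induced by $\xi_T$ on $\Lambda(V)=T(V)/I_\Lambda(V)$,
\begin{equation*}
\xi_\Lambda(\eta(u))=\xi_\Lambda(\pi_\Lambda(w))=\pi_\Lambda(\xi_T(w)).
\end{equation*}
Comparing the last two displays yields $\eta\circ\xi_C=\xi_\Lambda\circ\eta$, since $u$ was arbitrary.

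I do not expect a serious obstacle here: Lemma~\ref{lemma 2.4} has already absorbed the real content (the commuting triangle and the bijectivity of $\eta$). The only step deserving care is the surjectivity of $\hat\pi_C$, i.e.\ that every element of $C(V)$ is represented by a skew super-symmetric tensor $w$ --- this is the super-analogue of the classical fact that $\Sigma(V)$ is a linear complement to $I_C(V)$ in $T(V)$, and it is exactly what licenses running the pointwise argument over all of $C(V)$ rather than on a proper subspace; happily it is already packaged inside Lemma~\ref{lemma 2.4}.
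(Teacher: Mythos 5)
Your proof is correct: the diagram chase via the bijectivity of $\hat\pi_\Lambda$ (hence of $\hat\pi_C$, using Lemma~\ref{lemma 2.4}) together with the compatibility $\xi_C\circ\pi_C=\pi_C\circ\xi_T$ and $\xi_\Lambda\circ\pi_\Lambda=\pi_\Lambda\circ\xi_T$ is exactly the intended argument. The paper itself only cites this as Lemma~4.10 of \cite{CK} without reproducing a proof, and your argument matches the standard one given there.
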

If $I_{\Lambda}(V)$, $I_{C}(V)$ and $\Sigma(V)$ are stable under $\xi_T$, then by Lemma \ref{lemma 2.7}, we may identify $\xi_{C}$ with $\xi_{\Lambda}$ on $\Lambda(V)$ by means of $\eta$.
\begin{lemma}\label{lemma 2.8}
For any homogeneous element $x\in V$, $\iota_T (x)$ stabilizes $\Sigma(V)$.
\end{lemma}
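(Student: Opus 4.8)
The plan is to reduce the claim to the single operator identity
\begin{equation*}
\iota_T(x)\circ s \;=\; s\circ\iota_\Lambda(x)\colon\ \Lambda(V)\longrightarrow T(V),
\end{equation*}
and then to verify this identity by an explicit computation on exterior monomials. The reduction is immediate: since $\Sigma(V)=s(\Lambda(V))$, every element of $\Sigma(V)$ has the form $s(\omega)$ with $\omega\in\Lambda(V)$, so once the displayed identity is known we get $\iota_T(x)\bigl(s(\omega)\bigr)=s\bigl(\iota_\Lambda(x)\omega\bigr)\in s(\Lambda(V))=\Sigma(V)$, which is exactly the assertion. (Equivalently: $\pi_\Lambda\circ s=\mathrm{id}_{\Lambda(V)}$, as is immediate from the definition of $s$, and $\pi_\Lambda\circ\iota_T(x)=\iota_\Lambda(x)\circ\pi_\Lambda$ since $\iota_\Lambda(x)$ is the descent of $\iota_T(x)$; hence $\mathcal S:=s\circ\pi_\Lambda$ is an idempotent on $T(V)$ with image $\Sigma(V)$, and the displayed identity says precisely that $\iota_T(x)$ commutes with $\mathcal S$, which together with the already-recorded stability of $I_\Lambda(V)=\ker\mathcal S$ makes $\iota_T(x)$ block-diagonal for $T(V)=\Sigma(V)\oplus I_\Lambda(V)$.)

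To prove the identity, fix homogeneous $x_1,\dots,x_n\in V$ and expand both sides on $x_1\wedge\cdots\wedge x_n$. On the left, inserting the definition of $s$ and then the explicit derivation formula for $\iota_T(x)$ produces a double sum over $\sigma\in S_n$ and over the contracted slot $k\in\{1,\dots,n\}$, with coefficient $(-1)^{N_\sigma(x_1,\dots,x_n)}\sgn(\sigma)(-1)^{k-1}(-1)^{|x|(|x_{\sigma(1)}|+\cdots+|x_{\sigma(k-1)}|)}(x,x_{\sigma(k)})$ on the tensor $x_{\sigma(1)}\otimes\cdots\widehat{x_{\sigma(k)}}\cdots\otimes x_{\sigma(n)}$. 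On the right, one first applies the derivation property of $\iota_\Lambda(x)$ to $x_1\wedge\cdots\wedge x_n$, producing a sum over a deleted index $l$, and then applies $s$ to each of the resulting $(n-1)$-fold wedges, giving a double sum over $l\in\{1,\dots,n\}$ and over the permutations of $\{1,\dots,n\}\setminus\{l\}$. The two expressions are matched by grouping, for each fixed $l$, the permutations $\sigma\in S_n$ with $\sigma(k)=l$ and using the bijection between these and the permutations of $\{1,\dots,n\}\setminus\{l\}$; under this bijection the signs must be checked to agree. The one nontrivial input is a sign lemma describing how $N_\sigma$ changes when the $k$-th entry is deleted, which, combined with the consistency of $(\cdot,\cdot)$ (so that $(x,x_l)\neq0$ forces $|x|=|x_l|$, allowing the various Koszul exponents to be converted into one another), reconciles everything. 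A convenient way to organise this is by induction on $n$: the cases $n\le1$ are trivial because the difference of the two sides lands in $I_\Lambda(V)$, which vanishes in degrees $0$ and $1$, and the inductive step uses a recursion expressing $s(y\wedge w)$ as a signed sum of the insertions of $y$ into the tensor slots of $s(w)$, together with the derivation property of $\iota_T(x)$.

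The main obstacle is precisely this sign bookkeeping: one must verify that the super-permutation sign $(-1)^{N_\sigma}\sgn(\sigma)$ coming from $s$, the Koszul sign generated when $\iota_T(x)$ is carried past the first $k-1$ tensor factors, and the analogous Koszul sign generated by $\iota_\Lambda(x)$ in $\Lambda(V)$, all cancel correctly after re-indexing. There is no conceptual difficulty beyond the combinatorics of $N_\sigma$ under deletion, but getting every sign right is the heart of the argument.
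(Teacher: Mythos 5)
Your proposal follows essentially the same route as the paper: the paper likewise applies $\iota_T(x)$ to the skew super-symmetrization $\sum_{\sigma}(-1)^{N_\sigma(x_1,\ldots,x_n)}\sgn(\sigma)\,x_{\sigma(1)}\otimes\cdots\otimes x_{\sigma(n)}$, re-indexes the resulting double sum via $l=\sigma(k)$ and the induced permutation of $\{1,\ldots,n\}\setminus\{l\}$, and proves your ``sign lemma'' on how $(-1)^{N_\sigma}\sgn(\sigma)$ behaves under deletion of the $k$-th entry (using consistency of the form to trade $|x|$ for $|x_l|$) by computing $x_1\wedge\cdots\wedge x_n$ in two ways, which amounts exactly to your identity $\iota_T(x)\circ s=s\circ\iota_\Lambda(x)$ in un-normalized form. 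The one step you leave as ``to be checked'' is precisely the paper's displayed deletion identity, so your outline is correct and matches the paper's argument.
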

\begin{proof}
Let $\sigma\in S_n$ be a permutation of the set $\{1,\ldots,n\}$. For any integer $l$ between $1$ and $n$, there exists a unique integer $k$ such that $\sigma(k)=l$. Denote the permutation group of the set of $\{1,\ldots,n\}\backslash\{l\}$ by $S_{n,l}$. Let $\tau(\sigma;k,l)\in S_{n,l}$ be the permutation such that
\begin{equation*}
\tau(\sigma;k,l)(1,\ldots,\hat{l},\ldots,n)=(\sigma(1),\ldots,\widehat{\sigma(k)},\ldots,\sigma(n)).
\end{equation*}
Recall that $N_\sigma(x_1,\ldots,x_n)$ is the number of pairs $i<j$ such that $x_i, x_j$ are odd elements and $\sigma^{-1}(i)>\sigma^{-1}(j)$. Then
\begin{equation*}
\begin{aligned}
 x_1\wedge\cdots\wedge x_n=&(-1)^{N_\sigma(x_1,\ldots,x_n)}\sgn(\sigma)x_{\sigma (1)}\wedge\cdots \wedge x_{\sigma (n)}\\
=&(-1)^{k-1+N_\sigma(x_1,\ldots,x_n)}(-1)^{|x_{\sigma (k)}|(|x_{\sigma (1)}|+\cdots +|x_{\sigma (k-1)}|)}\sgn(\sigma) \\
&x_{\sigma (k)}\wedge x_{\sigma (1)}\wedge\cdots \wedge \widehat{x_{\sigma(k)}}\wedge\cdots \wedge x_{\sigma (n)}.
\end{aligned}
\end{equation*}
On the other hand, we have
\begin{equation*}
\begin{aligned}
 x_1\wedge\cdots\wedge x_n=&(-1)^{l-1}(-1)^{|x_l|(|x_1|+\cdots +|x_{l-1}|)}x_l\wedge x_1\wedge\cdots\wedge \widehat{x_l}\wedge\cdots\wedge x_n\\
=&(-1)^{l-1+N_{\tau(\sigma;k,l)}(x_1,\ldots,\widehat{x_l},\ldots,x_n)}(-1)^{|x_l|(|x_1|+\cdots +|x_{l-1}|)}\sgn(\tau(\sigma;k,l)) \\
& x_l\wedge x_{\tau(\sigma;k,l)(1)}\wedge\cdots\wedge \widehat{x_{\tau(\sigma;k,l)(l)}}\wedge\cdots\wedge x_{\tau(\sigma;k,l)(n)}\\
=&(-1)^{l-1+N_{\tau(\sigma;k,l)}(x_1,\ldots,\widehat{x_l},\ldots,x_n)}(-1)^{|x_l|(|x_1|+\cdots +|x_{l-1}|)}\sgn(\tau(\sigma;k,l))\\
&x_{\sigma (k)}\wedge x_{\sigma (1)}\wedge\cdots \wedge \widehat{x_{\sigma(k)}}\wedge\cdots \wedge x_{\sigma (n)},
\end{aligned}
\end{equation*}
which implies that
\begin{equation}\label{eq 2.2.5}
\begin{aligned}
 &(-1)^{k-1+N_\sigma(x_1,\ldots,x_n)}(-1)^{|x_{\sigma (k)}|(|x_{\sigma (1)}|+\cdots +|x_{\sigma (k-1)}|)}\sgn(\sigma)\\
=&(-1)^{l-1+N_{\tau(\sigma;k,l)}(x_1,\ldots,\widehat{x_l},\ldots,x_n)}(-1)^{|x_l|(|x_1|+\cdots +|x_{l-1}|)}\sgn(\tau(\sigma;k,l)).
\end{aligned}
\end{equation}

Conversely, for any $\tau\in S_{n,l}$, there exist permutations $\sigma_i(1\leq i\leq n)$ of $\{1,\ldots,n\}$ such that
\begin{equation}\label{eq 2.2.6}
\tau(\sigma_i;i,l)=\tau.
\end{equation}
In fact, $\sigma_i$ is defined by $\sigma_i(i)=l$ and
\begin{equation*}
\sigma_i(1,\ldots,\hat{i},\ldots,n)=(\tau(1),\ldots,\widehat{\tau(l)},\ldots,\tau(n)).
\end{equation*}

Let $S(k;x_1,x_2,\ldots,x_k)$ denote
\begin{equation*}
  \sum_{\sigma\in S_k}(-1)^{N_\sigma(x_1,\ldots,x_k)}\sgn(\sigma)x_{\sigma (1)}\otimes\cdots\otimes x_{\sigma (k)}.
\end{equation*}
Identities (\ref{eq 2.2.5}) and (\ref{eq 2.2.6}) imply that
\begin{equation*}
\begin{aligned}
 &\iota_T(x)(S(n;x_1,x_2,\ldots,x_n))\\
=&\iota_T(x)(\sum_{\sigma\in S_n}(-1)^{N_\sigma(x_1,\ldots,x_n)}\sgn(\sigma)x_{\sigma (1)}\otimes\cdots\otimes x_{\sigma (n)})\\
=&\sum_{\sigma\in S_n}\sum_{k=1}^n (-1)^{k-1+N_\sigma(x_1,\ldots,x_n)}(-1)^{|x_{\sigma (k)}|(|x_{\sigma (1)}|+\cdots +|x_{\sigma (k-1)}|)}\sgn(\sigma) \\
&(x,x_{\sigma (k)}) x_{\sigma (1)}\otimes\cdots \otimes \widehat{x_{\sigma(k)}}\otimes\cdots \otimes x_{\sigma (n)}\\
=&\sum_{\sigma\in S_n}\sum_{l=1}^n (-1)^{l-1+N_{\tau(\sigma;\sigma^{-1}(l),l)}(x_1,\ldots,\widehat{x_l},\ldots,x_n)}(-1)^{|x_l|(|x_1|+\cdots +|x_{l-1}|)}\sgn(\tau(\sigma;\sigma^{-1}(l),l))\\
 &(x,x_l)x_{\tau(\sigma;\sigma^{-1}(l),l)(1)}\otimes\cdots\otimes \widehat{x_{\tau(\sigma;\sigma^{-1}(l),l)(l)}}\otimes\cdots\otimes x_{\tau(\sigma;\sigma^{-1}(l),l)(n)}\\
=&n\sum_{l=1}^n\sum_{\tau\in S_{n,l}}(-1)^{l-1+N_{\tau}(x_1,\ldots,\widehat{x_l},\ldots,x_n)}(-1)^{|x_l|(|x_1|+\cdots +|x_{l-1}|)}\sgn(\tau)(x,x_l)\\
 &x_{\tau(1)}\otimes\cdots\otimes \widehat{x_{\tau(l)}}\otimes\cdots\otimes x_{\tau(n)}\\
=&n\sum_{l=1}^n(-1)^{l-1}(-1)^{|x_l|(|x_1|+\cdots +|x_{l-1}|)}(x,x_l)S(n-1;x_1,\ldots,\widehat{x_l},\ldots,n).
\end{aligned}
\end{equation*}
The lemma follows.
\end{proof}
Thus, we may identify $\iota_{\Lambda}$ with $\iota_C$ on $\Lambda(V)$ by means of $\eta$, and we denote $\iota_{\Lambda}(=\iota_C)$ by $\iota$.
Let $\alpha_T$ be the linear map of $T(V)$ defined by
\begin{equation*}
\alpha_T(x_1\otimes\cdots\otimes x_n)=(-1)^{\frac{n(n-1)}{2}+N_{\sigma_0}(x_1,\ldots,x_n)}\sgn(\sigma)(x_n\otimes\cdots\otimes x_1),
\end{equation*}
where $x_1,x_2,\ldots,x_k$ are homogeneous elements in $V$ and
\begin{equation*}
\sigma_0=\left(
          \begin{array}{cccc}
            1&2&\ldots&n \\
            n&n-1&\ldots&1 \\
          \end{array}
        \right).
\end{equation*}
It is proved in \cite{CK} that $\alpha_T$ stabilizes $I_{\Lambda}(V)$, $I_{C}(V)$ and $\Sigma(V)$.
Hence, $\alpha_T$ descends to the linear map $\alpha_{\Lambda}$ (resp. $\alpha_{C}$) of $\Lambda(V)$ (resp. $C(V)$); we may identify $\alpha_{\Lambda}$ with $\alpha_{C}$ on $\Lambda(V)$ by means of $\eta$, and we denote $\alpha_{\Lambda}(=\alpha_{C})$ by $\alpha$.
\begin{lemma}[\cite{CK}]\label{lemma 2.10}
The linear map $\alpha$ has the following properties.
\begin{enumerate}
\item[(i)] $\alpha^2=1$.
\item[(ii)] $\alpha(u)=(-1)^{\frac{n(n-1)}{2}}u$ for any $u\in\Lambda^n(V)$.
\item[(iii)] $\alpha(u\wedge v)=(-1)^{|u||v|}\alpha(v)\wedge \alpha(u)$ and $\alpha(uv)=(-1)^{|u||v|}\alpha(v)\alpha(u)$ for any $u,v\in\Lambda(V)$.
\end{enumerate}
\end{lemma}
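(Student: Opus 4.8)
The plan is to derive all three parts from the explicit formula for $\alpha_T$ and then transport the result to $\Lambda(V)$ and $C(V)$; the only real content is a bookkeeping of parity signs. The first step is to simplify the definition of $\alpha_T$. The order-reversing permutation $\sigma_0$ has exactly $\binom{n}{2}$ inversions, so $\sgn(\sigma_0)=(-1)^{n(n-1)/2}$, the scalar $(-1)^{n(n-1)/2}\sgn(\sigma_0)$ equals $1$, and
\[
\alpha_T(x_1\otimes\cdots\otimes x_n)=(-1)^{N_{\sigma_0}(x_1,\ldots,x_n)}\,x_n\otimes\cdots\otimes x_1 .
\]
Moreover $\sigma_0^{-1}(i)>\sigma_0^{-1}(j)$ holds for \emph{every} pair $i<j$, so $N_{\sigma_0}(x_1,\ldots,x_n)$ is just the number of unordered pairs of odd vectors among $x_1,\ldots,x_n$; in particular it is unchanged under permutations of the $x_i$, and if $p$ of them are odd it is congruent to $\binom{p}{2}$ modulo $2$. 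This order-independence is the key fact.

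For (ii) I would take a decomposable $u=x_1\wedge\cdots\wedge x_n\in\Lambda^n(V)$ (with $x_i$ homogeneous), lift it to $x_1\otimes\cdots\otimes x_n$, apply the formula above, and project back to get $\alpha(u)=(-1)^{N_{\sigma_0}}\,x_n\wedge\cdots\wedge x_1$. Reversing this wedge monomial to natural order uses $\binom{n}{2}$ adjacent transpositions, each contributing a factor $-(-1)^{|x_i||x_j|}$; collecting these shows $x_n\wedge\cdots\wedge x_1=(-1)^{n(n-1)/2}(-1)^{N_{\sigma_0}}\,x_1\wedge\cdots\wedge x_n$, so the two factors $(-1)^{N_{\sigma_0}}$ cancel and $\alpha(u)=(-1)^{n(n-1)/2}u$. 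As decomposable elements span $\Lambda^n(V)$ and $\Lambda(V)=\bigoplus_n\Lambda^n(V)$, this proves (ii) for $\alpha_\Lambda$, hence for $\alpha$ on $C(V)$ too via the identification $\alpha_\Lambda=\alpha_C$ from Lemma \ref{lemma 2.7} and the stability of $\Sigma(V)$ under $\alpha_T$. Part (i) is then immediate, since $\alpha^2$ acts on $\Lambda^n(V)$ by $(-1)^{n(n-1)}=1$.

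For the exterior identity in (iii) I would reduce to decomposable $u\in\Lambda^m(V)$, $v\in\Lambda^n(V)$, use (ii) to replace $\alpha(u),\alpha(v),\alpha(u\wedge v)$ by scalar multiples of $u,v,u\wedge v$, and combine with the super-commutation rule $v\wedge u=(-1)^{mn}(-1)^{|u||v|}u\wedge v$ and the elementary identity $\binom{m}{2}+\binom{n}{2}+mn=\binom{m+n}{2}$; a short computation then gives $(-1)^{|u||v|}\alpha(v)\wedge\alpha(u)=(-1)^{(m+n)(m+n-1)/2}u\wedge v=\alpha(u\wedge v)$. For the Clifford identity the $\mathbb{Z}$-grading is unavailable, so I would instead work with Clifford monomials $u=x_1\cdots x_m$, $v=y_1\cdots y_n$ (which span $C(V)$), on which the descent of $\alpha_T$ gives $\alpha(x_1\cdots x_m)=(-1)^{N_{\sigma_0}(x_1,\ldots,x_m)}x_m\cdots x_1$, and similarly for $v$ and for the monomial $uv$. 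With $p,q$ the numbers of odd factors in $u,v$, the relations $N_{\sigma_0}(x_1,\ldots,x_m,y_1,\ldots,y_n)\equiv\binom{p}{2}+\binom{q}{2}+pq$ and $pq\equiv|u||v|\pmod 2$ give $\alpha(uv)=(-1)^{|u||v|}\alpha(v)\alpha(u)$ directly. (Alternatively, reduce to $u,v\in V$, where both identities follow in one line from $\alpha|_V=\mathrm{id}$, and induct on the number of factors, the consistency of $(-1)^{|u||v|}$ with associativity being automatic modulo $2$.)

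The only genuine obstacle is the sign count in step (ii): one must confirm that the parity sign picked up when a wedge (or Clifford) monomial is reversed is exactly the factor $(-1)^{N_{\sigma_0}}$ built into $\alpha_T$, so that they cancel. Granting this, (i) is a corollary and (iii) comes down to the numerical identity $\binom{m}{2}+\binom{n}{2}+mn=\binom{m+n}{2}$ and to the order-independence of $N_{\sigma_0}$ noted at the outset.
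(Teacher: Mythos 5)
Your proof is correct, but there is nothing in the paper to compare it against: Lemma \ref{lemma 2.10} is imported from \cite{CK} without proof, so your argument is a genuine (and welcome) reconstruction rather than a variant of an in-paper one. The two computations that carry all the weight both check out. First, $\sgn(\sigma_0)=(-1)^{n(n-1)/2}$ does cancel the explicit sign in the definition of $\alpha_T$ (note this also silently repairs the paper's typo $\sgn(\sigma)$ for $\sgn(\sigma_0)$), and since $\sigma_0^{-1}(i)>\sigma_0^{-1}(j)$ for every $i<j$, the count $N_{\sigma_0}(x_1,\ldots,x_n)$ equals $\tfrac{p(p-1)}{2}$ with $p$ the number of odd factors, hence is order-independent --- this is exactly what makes the sign $(-1)^{N_{\sigma_0}}$ from $\alpha_T$ cancel against the sign produced by reversing a wedge or Clifford monomial, and is consistent with the identity $x_1\wedge\cdots\wedge x_n=(-1)^{N_{\sigma}}\sgn(\sigma)\,x_{\sigma(1)}\wedge\cdots\wedge x_{\sigma(n)}$ used in the proof of Lemma \ref{lemma 2.8}. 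Second, the additivity $\tfrac{m(m-1)}{2}+\tfrac{n(n-1)}{2}+mn=\tfrac{(m+n)(m+n-1)}{2}$ and the congruence $N_{\sigma_0}(x_1,\ldots,x_m,y_1,\ldots,y_n)\equiv N_{\sigma_0}(x)+N_{\sigma_0}(y)+pq \pmod 2$ with $pq\equiv|u||v|$ give both halves of (iii). The only inputs you take on faith are that $\alpha_T$ stabilizes $I_C(V)$, $I_\Lambda(V)$ and $\Sigma(V)$ (so that $\alpha_C$, $\alpha_\Lambda$ exist and are identified via $\eta$); the paper itself only quotes these from \cite{CK}, so relying on them is legitimate here. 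One small presentational point: in the Clifford half of (iii) the identity $\alpha(uv)=(-1)^{|u||v|}\alpha(v)\alpha(u)$ should be stated for $\mathbb{Z}_2$-homogeneous $u,v$ and extended bilinearly over homogeneous components, since the sign depends only on parity; your monomial argument already respects this.
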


\begin{lemma}\label{lemma 2.11}
Let $x,u$ be homogeneous elements in $V$ and $\Lambda^k(V)$ respectively. Then
\begin{equation*}
x u+(-1)^{k-1}(-1)^{|x||u|}u x=2\iota(x)u.
\end{equation*}
\end{lemma}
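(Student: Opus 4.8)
The plan is to express left Clifford multiplication by $x$ on $\Lambda(V)$ in closed form, then invoke the anti-automorphism $\alpha$ of Lemma~\ref{lemma 2.10} to convert right multiplication by $x$ into left multiplication, and finally collect signs.

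First I would recall that, under the identification $C(V)\cong\Lambda(V)$ provided by $\eta$, left Clifford multiplication by $x\in V$ is exactly the operator $\gamma(x)=\epsilon_\Lambda(x)+\iota(x)$, since $\eta(xu)=\gamma(xu)1_{\Lambda(V)}=\gamma(x)\gamma(u)1_{\Lambda(V)}=\gamma(x)\eta(u)$ and $\eta(x)=\gamma(x)1_{\Lambda(V)}=x$. Hence for homogeneous $u\in\Lambda^k(V)$ one has
\begin{equation*}
xu=x\wedge u+\iota(x)u,\qquad x\wedge u\in\Lambda^{k+1}(V),\ \ \iota(x)u\in\Lambda^{k-1}(V).
\end{equation*}

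Next I would apply $\alpha$ to the Clifford product $ux$. Using the rule $\alpha(uv)=(-1)^{|u||v|}\alpha(v)\alpha(u)$ for Clifford multiplication (Lemma~\ref{lemma 2.10}(iii)) together with the fact that $\alpha$ acts on $\Lambda^{j}(V)$ by the scalar $(-1)^{j(j-1)/2}$ (Lemma~\ref{lemma 2.10}(ii)), in particular $\alpha(x)=x$, one gets $\alpha(ux)=(-1)^{|u||x|}(-1)^{k(k-1)/2}\,xu$. Applying $\alpha$ once more, using $\alpha^{2}=1$ and the displayed formula for $xu$ — and keeping in mind that $\alpha$ scales the two homogeneous components $\Lambda^{k+1}(V)$, $\Lambda^{k-1}(V)$ of $xu$ by \emph{different} scalars — yields
\begin{equation*}
ux=(-1)^{|u||x|}(-1)^{k(k-1)/2}\Bigl((-1)^{(k+1)k/2}\,x\wedge u+(-1)^{(k-1)(k-2)/2}\,\iota(x)u\Bigr).
\end{equation*}
Since $\frac{k(k-1)}{2}+\frac{k(k+1)}{2}=k^{2}$ and $\frac{k(k-1)}{2}+\frac{(k-1)(k-2)}{2}=(k-1)^{2}$, reducing the exponents modulo $2$ gives $ux=(-1)^{|u||x|}\bigl((-1)^{k}x\wedge u+(-1)^{k-1}\iota(x)u\bigr)$. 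Then I would multiply through by $(-1)^{k-1}(-1)^{|x||u|}$: the parity signs cancel, $(-1)^{2k-1}=-1$ and $(-1)^{2k-2}=1$, so $(-1)^{k-1}(-1)^{|x||u|}ux=-x\wedge u+\iota(x)u$; adding the formula for $xu$ gives $xu+(-1)^{k-1}(-1)^{|x||u|}ux=2\iota(x)u$.

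The main obstacle is purely the sign bookkeeping in the step where $\alpha$ is applied twice: one must not treat $\alpha$ as a single scalar on $xu$ but componentwise on $\Lambda^{k\pm1}(V)$, and then verify that the triangular-number exponents telescope to the clean parities $k$ and $k-1$. A slightly longer alternative would be induction on $k$, whose base case $k=1$ is precisely the defining Clifford relation $xy+(-1)^{|x||y|}yx=2(x,y)$ and whose inductive step reduces the degree via $xu=x\wedge u+\iota(x)u$; but the $\alpha$-argument is cleaner.
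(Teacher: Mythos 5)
Your proof is correct and follows essentially the same route as the paper's: both arguments convert right Clifford multiplication into left multiplication via the anti-automorphism $\alpha$, split $xu=\epsilon_\Lambda(x)u+\iota(x)u$ into its components in $\Lambda^{k+1}(V)$ and $\Lambda^{k-1}(V)$, and track the triangular-number signs componentwise (the paper merely packages this as the degree decomposition $\gamma_R(x)=\sum_j b_j$ with $b_j=(-1)^{j(2k+j-1)/2}(-1)^{|x||u|}a_j$). The sign bookkeeping in your computation checks out.
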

\begin{proof}
Recall that $\gamma(x)=\iota(x)+\epsilon(x)$ is the operator of left Clifford multiplication in $\Lambda(V)$ by $x$. Let $\gamma_R(x)\in\End \Lambda(V)$ be the operator of right Clifford multiplication in $\Lambda(V)$ by $x$. Then
\begin{equation*}
x u+(-1)^{k-1}(-1)^{|x||u|}u x=\gamma(x)(u)+(-1)^{k-1}(-1)^{|x||u|}\gamma_R(x)(u).
\end{equation*}
Let
\begin{equation*}
\End^j(\Lambda(V))=\{\xi\in\End (\Lambda(V))|\xi(\Lambda^i(V))\subset \Lambda^{i+j}(V)\}.
\end{equation*}
Then
\begin{equation*}
\End(\Lambda(V))=\bigoplus_{j\in\mathbb{Z}}\End^j(\Lambda(V)).
\end{equation*}
Write $\gamma(x)=\sum\limits_{j\in {\mathbb Z}}a_{j}$ and $\gamma_R(x)=\sum\limits_{j\in {\mathbb Z}}b_{j}$, where $a_{j},b_{j}\in \End ^{j}(\Lambda(V))$. Then $a_{j}=0$ unless $j\in\{-1,1\}$, and $a_{-1}=\iota(x), a_1=\epsilon(x)$. By Lemma \ref{lemma 2.10},
\begin{equation*}
\gamma_R(x)(u)=u x=\alpha(\alpha(u x))=(-1)^{\frac{k(k-1)}{2}}(-1)^{|x||u|}\alpha(x u)=(-1)^{\frac{k(k-1)}{2}}(-1)^{|x||u|}\alpha(\gamma(x)u).
\end{equation*}
It follows that
\begin{equation*}
b_j(u)=(-1)^{\frac{k(k-1)}{2}}(-1)^{|x||u|}\alpha(a_j(u))=(-1)^{\frac{j(2k+j-1)}{2}}(-1)^{|x||u|}a_j(u).
\end{equation*}
Hence we have $b_{j}=0$ unless $j\in\{-1,1\}$, and
\begin{equation*}
b_{-1}(u)=(-1)^{k-1}(-1)^{|x||u|}a_{-1}(u),\quad b_1=(-1)^k(-1)^{|x||u|}a_1(u).
\end{equation*}
Therefore, we have
\begin{equation*}
x u+(-1)^{k-1}(-1)^{|x||u|}u x=2a_{-1}(u)=2\iota(x)u.
\end{equation*}
The lemma follows.
\end{proof}

\section{Quadratic Lie superalgebraic structures on super vector spaces}\label{section3}
Let $\mathfrak{g}$ be a finite dimensional complex super vector space with a non-degenerate super-symmetric bilinear form $(\cdot,\cdot)$.

If there exists a Lie superalgebraic structure $[\cdot,\cdot]$ on $\mathfrak{g}$ such that $\mathfrak{g}$ is quadratic with respect to $(\cdot,\cdot)$, then there exists a unique $\phi\in\Lambda_{\bar 0}^3\mathfrak{g}$ such that
\begin{equation*}
(\phi,z_1\wedge z_2\wedge z_3)=-\frac{1}{2}([z_1,z_2],z_3)
\end{equation*}
for any $z_1,z_2,z_3\in\mathfrak{g}$.

Conversely, for any $\phi\in\Lambda_{\bar 0}^3\mathfrak{g}$, define the bracket $[\cdot,\cdot]^\phi$ on $\mathfrak{g}$ by
\begin{equation*}
[z_1,z_2]^\phi=2\iota(z_1)\iota(z_2)\phi.
\end{equation*}
First, by the identity (\ref{eq 2.2.2}), $[\cdot,\cdot]^\phi$ has skew super-symmetry, that is,
\begin{equation*}
[z_1,z_2]^\phi=-(-1)^{|z_1||z_2|}[z_2,z_1]^\phi
\end{equation*}
for any $z_1,z_2\in\mathfrak{g}$. Next, by Lemma \ref{lemma 2.5}, we have
\begin{equation*}
([z_1,z_2]^\phi,z_3)=(2\iota(z_1)\iota(z_2)\phi,z_3)=-2(\phi,z_1\wedge z_2\wedge z_3)
\end{equation*}
and
\begin{equation*}
(z_1,[z_2,z_3]^\phi)=(z_1,2\iota(z_2)\iota(z_3)\phi)=-2(\phi,z_1\wedge z_2\wedge z_3).
\end{equation*}
for homogeneous elements $z_1,z_2,z_3\in\mathfrak{g}$. It follows that $$([z_1,z_2]^\phi,z_3)=(z_1,[z_2,z_3]^\phi),$$ that is, $(\cdot,\cdot)$ is invariant with respect to the bracket $[\cdot,\cdot]^\phi$. Finally, we will give the condition for the bracket $[\cdot,\cdot]^\phi$ satisfying the super Jacobi
identity. Denote the Clifford square of $u$ by $u^2$ for any $u\in\Lambda(\mathfrak{g})$. By Lemma \ref{lemma 2.10}, we have $\alpha(\phi^2)=(-1)^{|\phi||\phi|}\alpha(\phi)\alpha(\phi)=\phi^2$. Then $$\phi^2=(\phi^2)_4+(\phi^2)_0.$$
\begin{lemma}\label{lemma III.1}
If $\phi\in\Lambda_{\bar 0}^3\mathfrak{g}$, then
\begin{equation}\label{eq r10}
\iota(z_1)\iota(z_2)\iota(z_3)\phi^2=\frac{1}{2}([z_1,[z_2,z_3]^\phi]^\phi-[[z_1,z_2]^\phi,z_3]^\phi-(-1)^{|z_1||z_2|}[z_2,[z_1,z_3]^\phi]^\phi)
\end{equation}
for homogeneous elements $z_1,z_2,z_3\in\mathfrak{g}$.
\end{lemma}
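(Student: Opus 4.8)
The plan is to evaluate both sides of (\ref{eq r10}) explicitly in terms of iterated contractions of $\phi$ and then compare. Throughout write $\psi_i=\iota(z_i)\phi\in\Lambda^2(\mathfrak g)$, $\psi_{ij}=\iota(z_i)\iota(z_j)\phi\in\mathfrak g$ and $\psi_{123}=\iota(z_1)\iota(z_2)\iota(z_3)\phi\in\mathbb C$, and let $uv$ denote the Clifford product on $\Lambda(\mathfrak g)\cong C(\mathfrak g)$.

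First I would record the Leibniz rule to be used. Since $\iota_T(z)$ is a derivation of $T(\mathfrak g)$ of degree $(-1,|z|)$ that stabilizes $\Sigma(\mathfrak g)$, $I_C(\mathfrak g)$ and $I_\Lambda(\mathfrak g)$ (Lemma \ref{lemma 2.8} and \S2.2), Lemma \ref{lemma 2.7} gives $\eta\circ\iota_C(z)=\iota_\Lambda(z)\circ\eta$; lifting a $\mathbb Z$-homogeneous $u\in\Lambda^p(\mathfrak g)$ and an arbitrary $v$ to skew super-symmetric tensors, applying $\iota_T(z)$ (whose Leibniz rule is Definition \ref{def 2.3}(ii) with $(k,d)=(-1,|z|)$) and projecting to $C(\mathfrak g)$, one obtains
$$\iota(z)(uv)=\iota(z)(u)\,v+(-1)^{p}(-1)^{|z||u|}\,u\,\iota(z)(v).$$
Applying this three times to $\phi^2=\phi\phi$ and using $\iota(z_i)\psi_j=\psi_{ij}$, $\iota(z_1)\psi_{23}=\psi_{123}$, etc., expands $\iota(z_1)\iota(z_2)\iota(z_3)\phi^2$ into eight terms; the two terms $\psi_{123}\phi$ and $-\phi\psi_{123}$ cancel because $\psi_{123}$ is a scalar, leaving six terms of the shape (vector)(2-form) or (2-form)(vector), namely signed multiples of $\psi_{23}\psi_1,\ \psi_{13}\psi_2,\ \psi_3\psi_{12},\ \psi_{12}\psi_3,\ \psi_2\psi_{13},\ \psi_1\psi_{23}$.

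Next I would reduce each such Clifford product to exterior data: for a vector $w$ and $u\in\Lambda^2(\mathfrak g)$ one has $w\,u=\epsilon_\Lambda(w)u+\iota_\Lambda(w)u=w\wedge u+\iota(w)u$, and Lemma \ref{lemma 2.11} with $k=2$ gives $u\,w=(-1)^{|w||u|}(w\wedge u-\iota(w)u)$. Substituting these, the six $\Lambda^3$-components pair off and cancel — as they must, since $\phi^2=(\phi^2)_4+(\phi^2)_0$ forces $\iota(z_1)\iota(z_2)\iota(z_3)\phi^2\in\mathfrak g$ — and collecting the $\Lambda^1$-components gives
$$\iota(z_1)\iota(z_2)\iota(z_3)\phi^2=-2(-1)^{|z_1|(|z_2|+|z_3|)}\iota(\psi_{23})\psi_1+2(-1)^{|z_2||z_3|}\iota(\psi_{13})\psi_2-2\iota(\psi_{12})\psi_3.$$
For the right-hand side of (\ref{eq r10}), $[z_i,z_j]^\phi=2\psi_{ij}$ gives $[z_1,[z_2,z_3]^\phi]^\phi=4\iota(z_1)\iota(\psi_{23})\phi$, $[[z_1,z_2]^\phi,z_3]^\phi=4\iota(\psi_{12})\psi_3$, $[z_2,[z_1,z_3]^\phi]^\phi=4\iota(z_2)\iota(\psi_{13})\phi$; using (\ref{eq 2.2.2}) to rewrite $\iota(z_1)\iota(\psi_{23})=-(-1)^{|z_1|(|z_2|+|z_3|)}\iota(\psi_{23})\iota(z_1)$ and similarly for the third term, and substituting into $\tfrac12([z_1,[z_2,z_3]^\phi]^\phi-[[z_1,z_2]^\phi,z_3]^\phi-(-1)^{|z_1||z_2|}[z_2,[z_1,z_3]^\phi]^\phi)$, the parity signs collapse to give exactly the right side of the previous display. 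This proves (\ref{eq r10}).

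The only real difficulty is the sign bookkeeping. In particular one must not drop the factor $(-1)^{p}$ in the Leibniz rule for $\iota(z)$ on $C(\mathfrak g)$ (it comes from the $\mathbb Z$-degree $p$ of the left factor via Definition \ref{def 2.3}(ii) — so $p=3$ for $\phi$, $p=2$ for a $2$-form, $p=1$ for a vector — and without it the $\Lambda^3$-terms fail to cancel); and one must carefully track the $\mathbb Z_2$-parity signs that appear when moving $\iota(z_i)$ past homogeneous factors and when passing between the exterior and Clifford products.
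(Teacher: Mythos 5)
Your proof is correct and follows essentially the same route as the paper's: both expand $\iota(z_1)\iota(z_2)\iota(z_3)(\phi\phi)$ by the Leibniz rule for contractions, cancel the two scalar terms, and arrive at the same six cross terms of the form $(\iota(z_i)\phi)(\iota(z_j)\iota(z_k)\phi)$ and $(\iota(z_j)\iota(z_k)\phi)(\iota(z_i)\phi)$. The only (cosmetic) difference is in the final conversion: where you split each Clifford product into its $\Lambda^3$ and $\Lambda^1$ components via Lemma \ref{lemma 2.11} and match contractions explicitly, the paper groups the same six terms into three Clifford commutators and invokes the identity (\ref{eq 2.2.4-1}), $\ad u(z)=-2(-1)^{|u||z|}\iota(z)u$, which your degree-splitting argument in effect re-derives.
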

\begin{proof}
By a direct calculation, we have
%\begin{equation*}
%\begin{aligned}
% &\iota(z_1)\iota(z_2)\iota(z_3)\phi^2\\
%=&\iota(z_1)\iota(z_2)((\iota(z_3)\phi)\phi-\phi(\iota(z_3)\phi))\\
%=&\iota(z_1)((\iota(z_2)\iota(z_3)\phi)\phi+(-1)^{|z_2||z_3|}(\iota(z_3)\phi)(\iota(z_2)\phi)-(\iota(z_2)\phi)(\iota(z_3)\phi)+\phi(\iota(z_2)\iota(z_3)\phi))\\
%=&(\iota(z_1)\iota(z_2)\iota(z_3)\phi)\phi-(-1)^{|z_1||z_2|+|z_1||z_3|}(\iota(z_2)\iota(z_3)\phi)(\iota(z_1)\phi)+(-1)^{|z_2||z_3|}(\iota(z_1)\iota(z_3)\phi)(\iota(z_2)\phi)\\
% &+(-1)^{|z_1||z_3|+|z_2||z_3|}(\iota(z_3)\phi)(\iota(z_1)\iota(z_2)\phi)-(\iota(z_1)\iota(z_2)\phi)(\iota(z_3)\phi)-(-1)^{|z_1||z_2|}(\iota(z_2)\phi)(\iota(z_1)\iota(z_3)\phi)\\
% &+(\iota(z_1)\phi)(\iota(z_2)\iota(z_3)\phi))-(\iota(z_1)\iota(z_2)\iota(z_3)\phi)\phi\\
%=&((\iota(z_1)\phi)(\iota(z_2)\iota(z_3)\phi))-(-1)^{|z_1||z_2|+|z_1||z_3|}(\iota(z_2)\iota(z_3)\phi)(\iota(z_1)\phi))\\
% &-(-1)^{|z_1||z_2|}((\iota(z_2)\phi)(\iota(z_1)\iota(z_3)\phi)-(-1)^{|z_1||z_2|+|z_2||z_3|}(\iota(z_1)\iota(z_3)\phi)(\iota(z_2)\phi))\\
% &+(-1)^{|z_1||z_3|+|z_2||z_3|}((\iota(z_3)\phi)(\iota(z_1)\iota(z_2)\phi)-(-1)^{|z_1||z_3|+|z_2||z_3|}(\iota(z_1)\iota(z_2)\phi)(\iota(z_3)\phi)).
%\end{aligned}
%\end{equation*}
\begin{equation*}
\begin{aligned}
 &\iota(z_1)\iota(z_2)\iota(z_3)\phi^2\\
=&((\iota(z_1)\phi)(\iota(z_2)\iota(z_3)\phi))-(-1)^{|z_1||z_2|+|z_1||z_3|}(\iota(z_2)\iota(z_3)\phi)(\iota(z_1)\phi))\\
 &-(-1)^{|z_1||z_2|}((\iota(z_2)\phi)(\iota(z_1)\iota(z_3)\phi)-(-1)^{|z_1||z_2|+|z_2||z_3|}(\iota(z_1)\iota(z_3)\phi)(\iota(z_2)\phi))\\
 &+(-1)^{|z_1||z_3|+|z_2||z_3|}((\iota(z_3)\phi)(\iota(z_1)\iota(z_2)\phi)-(-1)^{|z_1||z_3|+|z_2||z_3|}(\iota(z_1)\iota(z_2)\phi)(\iota(z_3)\phi)).
\end{aligned}
\end{equation*}
Note that $\iota(z_2)\iota(z_3)\phi=\frac{1}{2}[z_2,z_3]^\phi$, by the identity (\ref{eq 2.2.4-1}), we have
\begin{equation*}
\begin{aligned}
 &((\iota(z_1)\phi)(\iota(z_2)\iota(z_3)\phi))-(-1)^{|z_1||z_2|+|z_1||z_3|}(\iota(z_2)\iota(z_3)\phi)(\iota(z_1)\phi))\\
=&\frac{1}{2}\ad(\iota(z_1)\phi)([z_2,z_3]^\phi)=-(-1)^{|z_1||z_2|+|z_1||z_3|}\iota([z_2,z_3]^\phi)\iota(z_1)\phi=\frac{1}{2}[z_1,[z_2,z_3]^\phi]^\phi.
\end{aligned}
\end{equation*}
Similarly, we have
\begin{equation*}
(\iota(z_2)\phi)(\iota(z_1)\iota(z_3)\phi)-(-1)^{|z_1||z_2|+|z_2||z_3|}(\iota(z_1)\iota(z_3)\phi)(\iota(z_2)\phi)=\frac{1}{2}[z_2,[z_1,z_3]^\phi]^\phi
\end{equation*}
and
\begin{equation*}
(\iota(z_3)v)(\iota(z_1)\iota(z_2)\phi)-(-1)^{|z_1||z_3|+|z_2||z_3|}(\iota(z_1)\iota(z_2)\phi)(\iota(z_3)\phi)=\frac{1}{2}[z_3,[z_1,z_2]^\phi]^\phi.
\end{equation*}
Hence
\begin{equation*}
\iota(z_1)\iota(z_2)\iota(z_3)\phi^2=\frac{1}{2}([z_1,[z_2,z_3]^\phi]^\phi-[[z_1,z_2]^\phi,z_3]^\phi-(-1)^{|z_1||z_2|}[z_2,[z_1,z_3]^\phi]^\phi).
\end{equation*}
That is, the lemma follows.
\end{proof}
By the above lemma, the bracket $[\cdot,\cdot]^\phi$ satisfies the super Jacobi identity if and only if $(\phi^2)_4=0$, thus $\phi^2=(\phi^2)_0$ is a constant.
In summary, we have the following theorem.
\begin{theorem}\label{theorem III.2}
Let $\mathfrak{g}$ be a finite dimensional complex super vector space with a non-degenerate super-symmetric bilinear form $(\cdot,\cdot)$. Then there is a one-to-one correspondence between the set $$\mathcal{V}=\{\phi\in\Lambda_{\bar 0}^3\mathfrak{g}|\phi^2\in\mathbb{C}\}$$ and the set of quadratic Lie superalgebraic structures on $\mathfrak{g}$ with respect to $(\cdot,\cdot)$. Here the bracket of the Lie superalgebra $\mathfrak g$ corresponding to $\phi\in\mathcal{V}$ is defined by
\begin{equation*}
[z_1,z_2]^{\phi}=2\iota(z_1)\iota(z_2)\phi, \quad \forall z_1,z_2\in\mathfrak{g}.
\end{equation*}
\end{theorem}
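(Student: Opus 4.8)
The plan is to assemble the observations already made above into the asserted bijection. Write $\Psi$ for the assignment $\phi\mapsto[\cdot,\cdot]^\phi$ with $[z_1,z_2]^\phi=2\iota(z_1)\iota(z_2)\phi$, and $\Phi$ for the assignment sending a quadratic Lie superalgebraic structure $[\cdot,\cdot]$ on $\mathfrak g$ to the element $\phi\in\Lambda_{\bar 0}^3\mathfrak g$ determined by $(\phi,z_1\wedge z_2\wedge z_3)=-\tfrac12([z_1,z_2],z_3)$. It then remains to check three things: (i) $\Psi$ sends $\mathcal V$ into the set of quadratic Lie superalgebraic structures on $\mathfrak g$; (ii) $\Phi$ sends that set into $\mathcal V$; and (iii) $\Phi$ and $\Psi$ are mutually inverse.

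For (i), fix $\phi\in\mathcal V$. The bracket $[\cdot,\cdot]^\phi$ respects the $\mathbb Z_2$-grading, because $\iota(z_1)\iota(z_2)\phi$ has $\mathbb Z$-degree $1$ and parity $|z_1|+|z_2|$ and hence lies in $\mathfrak g_{|z_1|+|z_2|}$; it is skew super-symmetric by (\ref{eq 2.2.2}); the form $(\cdot,\cdot)$ is invariant for it by the computation carried out just before Lemma \ref{lemma III.1}; and since $\phi^2\in\mathbb C$ means $(\phi^2)_4=0$, it satisfies the super Jacobi identity by Lemma \ref{lemma III.1} and the remark following it. Thus $[\cdot,\cdot]^\phi$ makes $\mathfrak g$ a quadratic Lie superalgebra.

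For (ii), I would first record why $\Phi$ is well defined: for a quadratic structure $[\cdot,\cdot]$, the trilinear form $(z_1,z_2,z_3)\mapsto([z_1,z_2],z_3)$ vanishes unless $|z_1|+|z_2|+|z_3|=\bar 0$ (consistency of $(\cdot,\cdot)$ together with evenness of the bracket), is super-antisymmetric in $z_1,z_2$ by (\ref{eq 2.1.3}), and satisfies the cyclic identity $([z_1,z_2],z_3)=(-1)^{|z_1|(|z_2|+|z_3|)}([z_2,z_3],z_1)$ coming from invariance together with super-symmetry of $(\cdot,\cdot)$; since a transposition and a $3$-cycle generate $S_3$, this form is super-alternating and even, so after identifying $\mathfrak g^\ast\cong\mathfrak g$ via $(\cdot,\cdot)$ there is a unique $\phi\in\Lambda_{\bar 0}^3\mathfrak g$ with $(\phi,z_1\wedge z_2\wedge z_3)=-\tfrac12([z_1,z_2],z_3)$. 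That $\phi\in\mathcal V$ then follows because $[\cdot,\cdot]$ satisfies the super Jacobi identity, so Lemma \ref{lemma III.1} and the remark after it give $(\phi^2)_4=0$, i.e. $\phi^2=(\phi^2)_0\in\mathbb C$.

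For (iii), both directions rest on the identity $([z_1,z_2]^\phi,z_3)=(2\iota(z_1)\iota(z_2)\phi,z_3)=-2(\phi,z_1\wedge z_2\wedge z_3)$ furnished by Lemma \ref{lemma 2.5}. If $\phi\in\mathcal V$, then $\Phi(\Psi(\phi))$ pairs with $z_1\wedge z_2\wedge z_3$ to give $-\tfrac12([z_1,z_2]^\phi,z_3)=(\phi,z_1\wedge z_2\wedge z_3)$, and since the decomposables span $\Lambda^3(\mathfrak g)$ and the extended form on $\Lambda(\mathfrak g)$ is non-degenerate, $\Phi(\Psi(\phi))=\phi$. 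Conversely, if $[\cdot,\cdot]$ is a quadratic structure and $\phi=\Phi([\cdot,\cdot])$, then $([z_1,z_2]^\phi,z_3)=-2(\phi,z_1\wedge z_2\wedge z_3)=([z_1,z_2],z_3)$ for every $z_3$, so non-degeneracy of $(\cdot,\cdot)$ on $\mathfrak g$ forces $[\cdot,\cdot]^\phi=[\cdot,\cdot]$. Combining (i)--(iii) yields the one-to-one correspondence. I expect the only genuinely delicate point to be the well-definedness of $\Phi$ in step (ii): carefully tracking the Koszul signs so that super-antisymmetry in the first two arguments and the cyclic identity from invariance combine into full $S_3$-super-antisymmetry, certifying that $([z_1,z_2],z_3)$ is the contraction of a bona fide element of $\Lambda_{\bar 0}^3\mathfrak g$; the rest is a routine matter of quoting (\ref{eq 2.2.2}), Lemma \ref{lemma 2.5}, Lemma \ref{lemma III.1}, and non-degeneracy.
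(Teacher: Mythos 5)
Your proposal is correct and follows essentially the same route as the paper: the bracket $[\cdot,\cdot]^\phi$ is even and skew super-symmetric by (\ref{eq 2.2.2}), the form is invariant by Lemma \ref{lemma 2.5}, and Lemma \ref{lemma III.1} converts the super Jacobi identity into $(\phi^2)_4=0$, i.e.\ $\phi^2\in\mathbb{C}$; the paper merely leaves the well-definedness of $\Phi$ and the mutual-inverse check implicit, which you spell out. One presentational nit: in step (ii) you invoke Lemma \ref{lemma III.1}, which is stated for $[\cdot,\cdot]^\phi$, to a bracket $[\cdot,\cdot]$ only later shown (in step (iii)) to equal $[\cdot,\cdot]^\phi$, so the identity $[\cdot,\cdot]^{\Phi([\cdot,\cdot])}=[\cdot,\cdot]$ should be established before, not after, that application — but since it is proved independently, this is an ordering issue rather than a gap.
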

\begin{remarks}
The set $\mathcal{V}$ is not empty, since it at least contains the zero element. In the case $\phi=0$, the corresponding quadratic Lie superalgebra is super-commutative.
\end{remarks}

The following is to determine the scalar $\phi^2$ for any $\phi\in\mathcal{V}$. Let $\mathfrak{g}$ be the Lie superalgebra under the bracket  $$[z_1,z_2]^{\phi}=2\iota(z_1)\iota(z_2)\phi.$$ Suppose $\{e_1,\ldots,e_n\}$ is a homogeneous basis of $\mathfrak{g}$, with the $(\cdot,\cdot)$-dual homogeneous basis $\{e^1,\ldots,e^n\}$ of $\mathfrak{g}$. Here $|e_i|=|e^i|$ for $1\leq i \leq n$. Then
\begin{equation*}
\{e_i\wedge e_j\wedge e_k|1\leq i\leq j\leq k\leq n,e_i\wedge e_j\wedge e_k\neq 0\}
\end{equation*}
is a basis of $\Lambda^3\mathfrak{g}$, with the dual basis
\begin{equation*}
\left\{\frac{1}{(e^i\wedge e^j\wedge e^k,e_i\wedge e_j\wedge e_k)}e^i\wedge e^j\wedge e^k|1\leq i\leq j\leq k\leq n,e^i\wedge e^j\wedge e^k\neq 0\right\}.
\end{equation*}
In terms of these bases,
\begin{equation*}
\phi=\sum_{1\leq i\leq j\leq k\leq n,\atop e_i\wedge e_j\wedge e_k\neq 0}\frac{(\phi,e_i\wedge e_j\wedge e_k)}{(e^i\wedge e^j\wedge e^k,e_i\wedge e_j\wedge e_k)}e^i\wedge e^j\wedge e^k.
\end{equation*}
Since $\phi\in\Lambda_{\bar 0}^3\mathfrak{g}$, it follows that $$(\phi,e_i\wedge e_j\wedge e_k)= 0 \textit{ if } |e_i|+|e_j|+|e_k|\neq{\bar 0}.$$ If $i,j,k$ are different from each other and $|e_i|+|e_j|+|e_k|={\bar 0}$, then
\begin{equation*}
(e^i\wedge e^j\wedge e^k,e_i\wedge e_j\wedge e_k)=(-1)^{|e^i||e^j|+|e^i||e^k|+|e^j||e^k|}=(-1)^{|e_i||e_j|+|e_k||e_k|}.
\end{equation*}
If $j=k$, $i\neq j$, $|e_i|+|e_j|+|e_k|={\bar 0}$ and $e_i\wedge e_j\wedge e_k\neq 0$, then $|e_i|={\bar 0}$ and $|e_j|={\bar 1}$, thus
\begin{equation*}
(e^i\wedge e^j\wedge e^k,e_i\wedge e_j\wedge e_k)=(e^j\wedge e^i\wedge e^k,e_j\wedge e_i\wedge e_k)=-2=2(-1)^{|e_i||e_j|+|e_k||e_k|}
\end{equation*}
and
\begin{equation*}
(e^j\wedge e^k\wedge e^i,e_j\wedge e_k\wedge e_i)=-2=2(-1)^{|e_j||e_k|+|e_i||e_i|}.
\end{equation*}
If $i=j=k$ and $|e_i|+|e_j|+|e_k|={\bar 0}$, then $$|e_i|={\bar 0}, \quad e_i\wedge e_j\wedge e_k=0.$$
By the above discussion, we have
\begin{equation}\label{eq 3.1+1}
\phi=\frac{1}{6}\sum_{1\leq i, j, k\leq n}(-1)^{|e_i||e_j|+|e_k||e_k|}(\phi,e_i\wedge e_j\wedge e_k)e^i\wedge e^j\wedge e^k.
\end{equation}
Thus,
\begin{equation*}
\begin{aligned}
 \phi^2&=(\phi^2)_0=-(\phi,\phi)\\
       &=-\frac{1}{6}\sum_{1\leq i, j, k\leq n}(-1)^{|e_i||e_j|+|e_k||e_k|}(\phi,e_i\wedge e_j\wedge e_k)(e^i\wedge e^j\wedge e^k,\phi)\\
       &=-\frac{1}{24}\sum_{1\leq i, j, k\leq n}(-1)^{|e_i||e_j|+|e_k||e_k|}([e_i,e_j]^\phi,e_k)([e^i,e^j]^\phi,e^k)\\
       &=-\frac{1}{24}\sum_{1\leq i, j\leq n}(-1)^{|e_i||e_j|}([e_i,e_j]^\phi,[e^i,e^j]^\phi)\\
       &=\frac{1}{24}\sum_{1\leq i, j\leq n}(-1)^{|e_j||e_j|}([e_i,[e^i,e^j]^\phi]^\phi,e_j),
\end{aligned}
\end{equation*}
which implies that
\begin{equation}\label{eq 3.2}
(\phi^2)=(\phi^2)_0=\frac{1}{24}\str\sum_{i=1}^n\ad e_i\ad e^i.
\end{equation}
Since the map $\ad :\mathfrak{g}\rightarrow\End(\mathfrak{g})$ naturally extends to a homomorphism of associative algebras $\ad : T(\mathfrak{g})\rightarrow\End(\mathfrak{g})$, we have that
\begin{equation*}
(\phi^2)=(\phi^2)_0=\frac{1}{24}\str\ad (\Cas_\mathfrak{g}).
\end{equation*}
\begin{theorem}\label{theorem III.3}
For any $\phi\in \mathcal{V}$, let $\mathfrak{g}$ be the corresponding quadratic Lie superalgebra. Then the constant
\begin{equation*}
\phi^2=\frac{1}{24}\str\ad(\Cas_\mathfrak{g}).
\end{equation*}
\end{theorem}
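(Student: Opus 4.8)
The plan is to compute the scalar $\phi^2$ explicitly in terms of the structure constants of $\mathfrak{g}$ and then to recognize the answer as a supertrace of $\ad(\Cas_\mathfrak{g})$. Since $\phi\in\mathcal{V}$, the bracket $[\cdot,\cdot]^\phi$ obeys the super Jacobi identity, so by Lemma~\ref{lemma III.1} the degree-four part $(\phi^2)_4$ vanishes and $\phi^2=(\phi^2)_0\in\mathbb{C}$. The first step is to express this scalar through the extended bilinear form on $\Lambda(\mathfrak{g})$: for $u,v\in\Lambda^3\mathfrak{g}$ one has $(u,v)=(-1)^{3}(uv)_0=-(uv)_0$, hence $\phi^2=(\phi^2)_0=-(\phi,\phi)$.

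Next I would insert the homogeneous basis expansion of $\phi$ from (\ref{eq 3.1+1}) into one of the two slots of $(\phi,\phi)$ and replace every coefficient $(\phi,e_i\wedge e_j\wedge e_k)$ by the bracket expression $-\tfrac{1}{2}([e_i,e_j]^\phi,e_k)$ coming from the definition of $[\cdot,\cdot]^\phi$ and Lemma~\ref{lemma 2.5}, and likewise for the $(\cdot,\cdot)$-dual basis $\{e^i\}$. This turns $\phi^2$ into a quadruple sum over basis indices whose summand is a product of two structure constants carrying Koszul signs. Contracting the index $k$ by means of the dual-basis identity of Lemma~\ref{lemma 2.1}, and then using the invariance of $(\cdot,\cdot)$ and the skew super-symmetry of $[\cdot,\cdot]^\phi$ to rearrange the remaining double sum, one arrives at
\[
\phi^2=\frac{1}{24}\sum_{i,j}(-1)^{|e_j||e_j|}\bigl([e_i,[e^i,e^j]^\phi]^\phi,e_j\bigr)=\frac{1}{24}\,\str\sum_{i=1}^n\ad e_i\,\ad e^i,
\]
which is precisely (\ref{eq 3.2}); here the second equality is just the definition of the supertrace of the operator $\sum_i\ad e_i\,\ad e^i$ written out in the pair of dual bases $\{e_j\}$, $\{e^j\}$, again invoking Lemma~\ref{lemma 2.1} to match the slots.

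The argument is finished by observing that $\ad:\mathfrak{g}\to\End(\mathfrak{g})$ extends to an algebra homomorphism $T(\mathfrak{g})\to\End(\mathfrak{g})$ and that, by Lemma~\ref{lemma 2.1}, $\Cas_\mathfrak{g}=\sum_i e_ie^i$ does not depend on the chosen basis; consequently $\sum_i\ad e_i\,\ad e^i=\ad\bigl(\sum_i e_ie^i\bigr)=\ad(\Cas_\mathfrak{g})$, and therefore $\phi^2=\frac{1}{24}\str\ad(\Cas_\mathfrak{g})$.

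I expect the genuine difficulty to lie in the sign bookkeeping rather than in any conceptual point. Both the translation of $(\phi,e_i\wedge e_j\wedge e_k)$ into bracket form and the various reindexings of the quadruple sum generate parity factors such as $(-1)^{|e_i||e_j|}$ and $(-1)^{|e_j||e_j|}$, and it is exactly the interaction of these signs with the minus sign in front of $\tr(\delta)$ in the definition of $\str$ that makes a supertrace, rather than an ordinary trace, appear; verifying that every sign cancels as claimed is the delicate part, while everything else is routine multilinear algebra over the super vector space $\mathfrak{g}$.
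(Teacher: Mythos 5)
Your proposal is correct and follows essentially the same route as the paper: writing $\phi^2=(\phi^2)_0=-(\phi,\phi)$, expanding $\phi$ in the dual bases via (\ref{eq 3.1+1}), converting the coefficients to structure constants $-\tfrac12([e_i,e_j]^\phi,e_k)$, contracting over $k$ and using invariance and skew super-symmetry to reach $\frac{1}{24}\sum_{i,j}(-1)^{|e_j||e_j|}([e_i,[e^i,e^j]^\phi]^\phi,e_j)=\frac{1}{24}\str\sum_i\ad e_i\ad e^i=\frac{1}{24}\str\ad(\Cas_\mathfrak{g})$. This is exactly the computation leading to (\ref{eq 3.2}) in the paper.
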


\section{The criterion for $(\nu,(\cdot,\cdot)_\mathfrak{g})$ to be of Lie super type}
Let $\mathfrak{r}$ be a finite dimensional complex Lie superalgebra with a non-degenerate invariant super-symmetric bilinear form $(\cdot,\cdot)_\mathfrak{r}$, let $\mathfrak{p}$ be a finite dimensional complex super vector space with a non-degenerate super-symmetric
bilinear form $(\cdot,\cdot)_\mathfrak{p}$, and let
\begin{equation*}
\nu :\mathfrak{r}\rightarrow\mathfrak{osp}(\mathfrak{p})
\end{equation*}
be a $(\cdot,\cdot)_\mathfrak{p}$-invariant representation of $\mathfrak{r}$ on $\mathfrak{p}$. Define
\begin{equation*}
\mathfrak{g}=\mathfrak{r}\oplus\mathfrak{p},
\end{equation*}
and define a non-degenerate super-symmetric bilinear form $(\cdot,\cdot)_\mathfrak{g}$ on $\mathfrak{g}$ by
$$(\cdot,\cdot)_\mathfrak{g}|_\mathfrak{r}=(\cdot,\cdot)_\mathfrak{r},\quad  (\cdot,\cdot)_\mathfrak{g}|_\mathfrak{p}=(\cdot,\cdot)_\mathfrak{p}, \quad (\mathfrak{p},\mathfrak{r})_\mathfrak{g}=0.$$
The goal of this section is to give a necessary and sufficient condition for $(\nu,(\cdot,\cdot)_\mathfrak{g})$ to be of Lie super type. The case when $\mathfrak{r}=0$ has been studied in Section~\ref{section3}, this section is to study the case for $\mathfrak{r}\not=0$.

Let $r=\dim\mathfrak{r}$, $p=\dim\mathfrak{p}$ and $n=r+p$. Take a homogeneous basis $\{e_1,\ldots,e_n\}$ of $\mathfrak{g}$, given by a basis $\{x_1,\ldots,x_r\}$ of $\mathfrak{r}$ followed by a basis $\{y_1,\ldots,y_p\}$ of $\mathfrak{p}$. Since $\mathfrak{r}$ is $(\cdot,\cdot)_\mathfrak{g}$-orthogonal to $\mathfrak{p}$, we have the $(\cdot,\cdot)_\mathfrak{g}$-dual basis $\{e^1,\ldots,e^n\}=\{x^1,\ldots,x^r,y^1,\ldots,y^p\}$, where $\{x^1,\ldots,x^r\}$ is the $(\cdot,\cdot)_\mathfrak{r}$-dual basis of $\{x_1,\ldots,x_r\}$ and $\{y^1,\ldots,y^p\}$ is the $(\cdot,\cdot)_\mathfrak{p}$-dual basis of $\{y_1,\ldots,y_p\}$.

If $(\nu,(\cdot,\cdot)_\mathfrak{g})$ is of Lie super type, by Theorem \ref{theorem III.2}, the quadratic Lie superalgebraic structure on $\mathfrak{g}$ determines a cubic element $\phi\in\Lambda_{\bar 0}^3\mathfrak{g}$. The corresponding bracket $[\cdot,\cdot]^\phi$ is defined by
\begin{equation}\label{eq IV.1}
    [z_1,z_2]^\phi=2\iota(z_1)\iota(z_2)\phi
\end{equation}
for any $z_1,z_2\in\mathfrak{g}$. The condition (b) in the definition of a Lie super type says that
\begin{equation}\label{eq IV.2}
    [z_1,z_2]^\phi=[z_1,z_2]
\end{equation}
for any $z_1,z_2\in\mathfrak{r}$, and
\begin{equation}\label{eq IV.3}
    [x,y]^\phi=\nu(x)y
\end{equation}
for any $x\in\mathfrak{r}$, $y\in\mathfrak{p}$.

Let $\phi_\mathfrak{r}\in\Lambda_{\bar 0}^3(\mathfrak{r})$ be the element corresponding to the quadratic Lie superalgebraic structure on $\mathfrak{r}$.  Then
\begin{equation*}
    [x_i,x_j]=2\iota(x_i)\iota(x_j)\phi_\mathfrak{r}.
\end{equation*}
By the identity (\ref{eq 3.1+1}) and Lemma \ref{lemma 2.5}, we have
\begin{equation}\label{eq IV.4}
\phi_\mathfrak{r}=-\frac{1}{12}\sum_{1\leq i, j, k\leq r}(-1)^{|x_i||x_j|+|x_k||x_k|}([x_i,x_j],x_k)x^i\wedge x^j\wedge x^k.
\end{equation}
Define a cubic element $\phi_\mathfrak{p}\in\Lambda_{\bar 0}^3(\mathfrak{p})$ by
\begin{equation}\label{eq IV.5}
\phi_\mathfrak{p}=-\frac{1}{12}\sum_{1\leq i, j, k\leq p}(-1)^{|y_i||y_j|+|y_k||y_k|}([y_i,y_j]^\phi,y_k)y^i\wedge y^j\wedge y^k.
\end{equation}

By Lemma \ref{lemma 2.6}, there exists a unique Lie superalgebra homomorphism
\begin{equation*}
\nu_* :\mathfrak{r}\rightarrow\Lambda^2(\mathfrak{p})
\end{equation*}
such that, for any $x\in\mathfrak{r}$ and $y\in\mathfrak{p}$,
\begin{equation}\label{eq IV.5+1}
[\nu_*(x),y]_C=\nu_*(x) y-(-1)^{|\nu_*(x)||y|}y\nu_*(x)=\nu(x)y=[x,y]^\phi.
\end{equation}
Note that $\nu_*$ preserves the grading, that is, $|\nu_*(x)|=|x|$ for any $x\in\mathfrak{r}$. Clearly,
\begin{equation*}
\{y_i\wedge y_j|1\leq i\leq j\leq p,y_i\wedge y_j\neq 0\}
\end{equation*}
is a basis of $\Lambda^2\mathfrak{p}$ with the dual basis
\begin{equation*}
\left\{\frac{1}{(y^i\wedge y^j,y_i\wedge y_j)}y^i\wedge y^j|1\leq i\leq j\leq p,y^i\wedge y^j\neq 0\right\}.
\end{equation*}
Note that
\begin{equation*}
(y^i\wedge y^j,y_i\wedge y_j)=(-1)^{|y^i||y^j|}
\end{equation*}
if $i\neq j$, and
\begin{equation*}
(y^i\wedge y^i,y_i\wedge y_i)=-2=2(-1)^{|y^i||y^i|}
\end{equation*}
for any $y_i\in\mathfrak{p}_{\bar 1}$. Then for any $u\in\Lambda^2\mathfrak{p}$,
\begin{equation}\label{eq IV.5+2}
u=\sum_{1\leq i\leq j\leq p,\atop y_i\wedge y_j\neq 0}\frac{(u,y_i\wedge y_j)}{(y^i\wedge y^j,y_i\wedge y_j)}y^i\wedge y^j
    =\frac{1}{2}\sum_{1\leq i, j\leq p}(-1)^{|y^i||y^j|}(u,y_i\wedge y_j)y^i\wedge y^j.
\end{equation}
It follows that
\begin{equation*}
\begin{aligned}
\nu_*(x)=&\frac{1}{2}\sum_{1\leq i, j\leq p}(-1)^{|y^i||y^j|}(\nu_*(x),y_i\wedge y_j)y^i\wedge y^j\\
    =&\frac{1}{2}\sum_{1\leq i, j\leq p}(-1)^{|y^i||y^j|+|\nu_*(x)||y_i|}(\iota(y_i)\nu_*(x),y_j)y^i\wedge y^j\\
    =&-\frac{1}{4}\sum_{1\leq i, j\leq p}(-1)^{|y^i||y^j|}([x,y_i]^\phi,y_j)y^i\wedge y^j,
\end{aligned}
\end{equation*}
since
\begin{equation*}
 [x,y_i]^\phi=[\nu_*(x),y]_C=-2(-1)^{|\nu_*(x)||y_i|}\iota(y_i)\nu_*(x).
\end{equation*}
Let
\begin{equation}\label{eq IV.6}
\phi_\nu=\sum_{1\leq i\leq r}\nu_*(x_i)\wedge x^i.
\end{equation}
By a direct calculation, we have
\begin{equation*}
\begin{aligned}
\phi =&\sum_{1\leq i\leq j\leq k\leq n,\atop e_i\wedge e_j\wedge e_k\neq 0}\frac{(\phi,e_i\wedge e_j\wedge e_k)}{(e^i\wedge e^j\wedge e^k,e_i\wedge e_j\wedge e_k)}e^i\wedge e^j\wedge e^k\\
    =&-\frac{1}{2}\sum_{1\leq i\leq j\leq k\leq n,\atop e_i\wedge e_j\wedge e_k\neq 0}\frac{(([e_i,e_j]^\phi,e_k)}{(e^i\wedge e^j\wedge e^k,e_i\wedge e_j\wedge e_k)}e^i\wedge e^j\wedge e^k\\
    =&-\frac{1}{12}\sum_{1\leq i, j, k\leq r}(-1)^{|x_i||x_j|+|x_k||x_k|}([x_i,x_j],x_k)x^i\wedge x^j\wedge x^k\\
       &-\frac{1}{4}\sum_{1\leq i\leq r,1\leq j, k\leq p}(-1)^{|x_i||x_i|+|y_j||y_k|}([x_i,y_j]^\phi,y_k)x^i\wedge y^j\wedge y^k\\
     &-\frac{1}{12}\sum_{1\leq i, j, k\leq p}(-1)^{|y_i||y_j|+|y_k||y_k|}([y_i,y_j]^\phi,y_k)y^i\wedge y^j\wedge y^k\\
    =&\phi_\mathfrak{r}+\sum_{1\leq i\leq r}(-1)^{|x_i||x_i|}x^i\wedge\nu_*(x_i)+\phi_\mathfrak{p}\\
    =&\phi_\mathfrak{r}+\phi_\nu+\phi_\mathfrak{p}.
\end{aligned}
\end{equation*}
Furthermore, by the identity (\ref{eq 2.2.4-2}), for any homogeneous element $x\in\mathfrak{r}$,
\begin{equation*}
\begin{aligned}
 &([\nu_*(x),\phi_\mathfrak{p}]_C,y_i\wedge y_j\wedge y_k)\\
=&-(\phi_\mathfrak{p},[\nu_*(x),y_i\wedge y_j\wedge y_k]_C)\\
=&-(\phi_\mathfrak{p},[\nu_*(x),y_i]_C\wedge y_j\wedge y_k)-(-1)^{|x||y_i|}(\phi_\mathfrak{p},y_i\wedge [\nu_*(x),y_j]_C\wedge y_k)\\
 &-(-1)^{|x|(|y_i|+|y_j|)}(\phi_\mathfrak{p},y_i\wedge y_j\wedge [\nu_*(x),y_k]_C)\\
=&-(\phi,[x,y_i]^\phi\wedge y_j\wedge y_k)-(-1)^{|x||y_i|}(\phi,y_i\wedge [x,y_j]^\phi\wedge y_k)\\
 &-(-1)^{|x|(|y_i|+|y_j|)}(\phi,y_i\wedge y_j\wedge [x,y_k]^\phi)\\
=&-([[x,y_i]^\phi,y_j]^\phi,y_k)-(-1)^{|x||y_i|}([y_i,[x,y_j]^\phi]^\phi,y_k)-(-1)^{|x|(|y_i|+|y_j|)}([y_i,y_j]^\phi,[x,y_k]^\phi)\\
=&-([[x,y_i]^\phi,y_j]^\phi,y_k)-(-1)^{|x||y_i|}([y_i,[x,y_j]^\phi]^\phi,y_k)+([x,[y_i,y_j]^\phi]^\phi,y_k).
\end{aligned}
\end{equation*}
It follows that $[\nu_*(x),\phi_\mathfrak{p}]_C=0$ by the super Jacobi identity, that is, $$\phi_\mathfrak{p}\in (\Lambda_{\bar 0}^3(\mathfrak{p}))^\mathfrak{r}.$$

\begin{remark}
If $(\nu,(\cdot,\cdot)_\mathfrak{g})$ is of Lie super type, $\phi_\mathfrak{r}$ and $\phi_\nu$ are determined completely by the quadratic Lie superalgebraic structure of $\mathfrak{r}$ and the $\mathfrak{r}$-module structure on $\mathfrak{p}$. In order to give a Lie superalgebraic structure satisfying the conditions of a Lie super type, we only need to determine the cubic element $\phi_\mathfrak{p}$.
\end{remark}
%The identity (\ref{eq IV.2}) implies that $\phi_\mathfrak{r}$ is the $\Lambda^3(\mathfrak{r})$-projection of $\phi$.

Conversely, for any $\phi_\mathfrak{p}\in(\Lambda_{\bar 0}^3(\mathfrak{p}))^\mathfrak{r}$, define the cubic element $\phi\in\Lambda_{\bar 0}^3\mathfrak{g}$ by
\begin{equation*}
\phi=\phi_\mathfrak{r}+\phi_\nu+\phi_\mathfrak{p},
\end{equation*}
where $\phi_\mathfrak{r}$ and $\phi_\nu$ are defined by identities (\ref{eq IV.4}) and (\ref{eq IV.6}), respectively.
Let $[\cdot,\cdot]^\phi$ be the bracket on $\mathfrak{g}$ determined by the identity (\ref{eq IV.1}). By Section~\ref{section3}, we know that $[\cdot,\cdot]^\phi$ has skew super-symmetry and $(\cdot,\cdot)_\mathfrak{g}$ is invariant with respect to the bracket $[\cdot,\cdot]^\phi$.
Moreover,
\begin{equation*}
[x_i,x_j]^\phi=2\iota(x_i)\iota(x_j)\phi=2\iota(x_i)\iota(x_j)\phi_\mathfrak{r}=[x_i,x_j]
\end{equation*}
for any $1\leq i,j\leq r$ and
\begin{equation*}
[x_i,y_j]^\phi=2\iota(x_i)\iota(y_j)\phi=2\iota(x_i)\iota(y_j)\phi_\mathfrak{\nu}=2(-1)^{|x_i||y_j|}\iota(y_j)\nu_*(x_i)=[\nu_*(x_i),y_j]_C=\nu(x_i)y_j
\end{equation*}
for any $1\leq i\leq r$ and $1\leq j\leq p$.

Clearly, the super Jacobi identity holds for $x_i,x_j,x_k$ since $\mathfrak{r}$ is a Lie superalgebra and for $x_i,x_j,y_k$ since $\mathfrak{p}$ is a $\mathfrak{r}$-module.

We claim that the super Jacobi identity holds for $x_i,y_j,y_k$. In fact, let $P_\mathfrak{r}:\mathfrak{g}\rightarrow\mathfrak{r}$ and $P_\mathfrak{p}:\mathfrak{g}\rightarrow\mathfrak{p}$ be the projections with respect to the decomposition $\mathfrak{g}=\mathfrak{r}\oplus\mathfrak{p}$.
Since $[\cdot,\cdot]^\phi$ has skew super-symmetry and $(\cdot,\cdot)_\mathfrak{g}$ is invariant with respect to the bracket $[\cdot,\cdot]^\phi$, we have
\begin{equation*}
\begin{aligned}
 &([x_i,[y_j,y_k]^\phi]^\phi-[[x_i,y_j]^\phi,y_k]^\phi-(-1)^{|x_i||y_j|}[y_j,[x_i,y_k]^\phi]^\phi,x_l)\\
=&-(-1)^{|x_i|(|y_j|+|y_k|)}([y_j,y_k]^\phi,[x_i,x_l]^\phi)-([x_i,y_j]^\phi,[y_k,x_l]^\phi)-(-1)^{|x_i||y_j|}(y_j,[[x_i,y_k]^\phi,x_l]^\phi)\\
=&-(-1)^{|x_i|(|y_j|+|y_k|)}(y_j,[y_k,[x_i,x_l]^\phi]^\phi-(-1)^{|x_i||y_k|}[x_i,[y_k,x_l]^\phi]^\phi-[[y_k,x_i]^\phi,x_l]^\phi)\\
=&0
\end{aligned}
\end{equation*}
for any $1\leq l\leq r$. It follows that
\begin{equation}\label{eq IV.7}
P_\mathfrak{r}([x_i,[y_j,y_k]^\phi]^\phi-[[x_i,y_j]^\phi,y_k]^\phi-(-1)^{|x_i||y_j|}[y_j,[x_i,y_k]^\phi]^\phi)=0.
\end{equation}
Since $\phi_\mathfrak{p}\in(\Lambda_{\bar 0}^3(\mathfrak{p}))^\mathfrak{r}$, for any $1\leq l\leq p$, we have
\begin{equation*}
\begin{aligned}
&([x_i,[y_j,y_k]^\phi]^\phi-[[x_i,y_j]^\phi,y_k]^\phi-(-1)^{|x_i||y_j|}[y_j,[x_i,y_k]^\phi]^\phi,y_l)\\
=&([\nu_*(x),\phi_\mathfrak{p}]_C,y_j\wedge y_k\wedge y_l)=0,
\end{aligned}
\end{equation*}
that is,
\begin{equation}\label{eq IV.8}
P_\mathfrak{p}([x_i,[y_j,y_k]^\phi]^\phi-[[x_i,y_j]^\phi,y_k]^\phi-(-1)^{|x_i||y_j|}[y_j,[x_i,y_k]^\phi]^\phi)=0.
\end{equation}
Then the claim holds by identities (\ref{eq IV.7}) and (\ref{eq IV.8}).

At last, we consider the super Jacobi identity for $y_i,y_j,y_k$. For any $1\leq l\leq r$, we have
\begin{equation*}
\begin{aligned}
 &([y_i,[y_j,y_k]^\phi]^\phi-[[y_i,y_j]^\phi,y_k]^\phi-(-1)^{|y_i||y_j|}[y_j,[y_i,y_k]^\phi]^\phi,x_l)\\
=&(y_i,[[y_j,y_k]^\phi,x_l]^\phi)-([y_i,y_j]^\phi,[y_k,x_l]^\phi)+(-1)^{|y_j||y_k|}([y_i,y_k]^\phi,[y_j,x_l]^\phi)\\
=&(y_i,[[y_j,y_k]^\phi,x_l]^\phi-[y_j,[y_k,x_l]^\phi]^\phi+(-1)^{|y_j||y_k|}[y_k,[y_j,x_l]^\phi]^\phi)\\
=&0.
\end{aligned}
\end{equation*}
It follows that
\begin{equation}\label{eq 4.9}
P_\mathfrak{r}([y_i,[y_j,y_k]^\phi]^\phi-[[y_i,y_j]^\phi,y_k]^\phi-(-1)^{|y_i||y_j|}[y_j,[y_i,y_k]^\phi]^\phi)=0.
\end{equation}
Thus we only need to consider
\begin{equation*}
P_\mathfrak{p}([y_i,[y_j,y_k]^\phi]^\phi-[[y_i,y_j]^\phi,y_k]^\phi-(-1)^{|y_i||y_j|}[y_j,[y_i,y_k]^\phi]^\phi).
\end{equation*}
By Lemma \ref{lemma III.1}, we have
\begin{equation}\label{eq 4.10}
[y_i,[y_j,y_k]^\phi]^\phi-[[y_i,y_j]^\phi,y_k]^\phi-(-1)^{|y_i||y_j|}[y_j,[y_i,y_k]^\phi]^\phi=2\iota(y_i)\iota(y_j)\iota(y_k)\phi^2.
\end{equation}
Note that, for any $1\leq l\leq p$,
\begin{equation}\label{eq 4.11}
(\iota(y_i)\iota(y_j)\iota(y_k)\phi^2,y_l)=(\iota(y_i)\iota(y_j)\iota(y_k)(\phi_\mathfrak{\nu}^2+\phi_\mathfrak{p}^2),y_l).
\end{equation}
By Lemma \ref{lemma 2.1}, $\phi_\nu$ is independent of the choice of basis of $\mathfrak{r}$. It follows that
\begin{equation*}
\phi_\nu=\sum_{1\leq i\leq r}\nu_*(x_i)\wedge x^i=\sum_{1\leq j\leq r}(-1)^{|x_j||x^j|}\nu_*(x^j)\wedge x_j.
\end{equation*}
Thus,
\begin{equation*}
\phi_\mathfrak{\nu}^2=(\sum_{1\leq i\leq r}\nu_*(x_i)\wedge x^i)(\sum_{1\leq j\leq r}(-1)^{|x_j||x^j|}\nu_*(x^j)\wedge x_j).
\end{equation*}
Since $\nu_*(x_i)\in\Lambda^2(\mathfrak{p})$, we have $\iota(x^i)\nu_*(x_i)=0$. By Lemma \ref{lemma 2.10},
\begin{equation*}
\begin{aligned}
\nu_*(x_i)x^i&=\alpha^2(\nu_*(x_i)x^i)=(-1)^{|x_i||x^i|}\alpha(\alpha(x^i)\alpha(\nu_*(x_i)))=-(-1)^{|x_i||x^i|}\alpha(x^i\nu_*(x_i))\\
             &=-(-1)^{|x_i||x^i|}\alpha(x^i\wedge\nu_*(x_i)+\iota(x^i)\nu_*(x_i))=-(-1)^{|x_i||x^i|}\alpha(x^i\wedge\nu_*(x_i))\\
             &=-\alpha(\nu_*(x_i))\wedge\alpha(x^i)=\nu_*(x_i)\wedge x^i,
\end{aligned}
\end{equation*}
which implies that
\begin{equation}\label{eq 4.11+1}
\begin{aligned}
\phi_\mathfrak{\nu}^2&=(\sum_{1\leq s\leq r}\nu_*(x_s)x^s)(\sum_{1\leq t\leq r}(-1)^{|x_t||x^t|}\nu_*(x^t)\wedge x_t)\\
                     &=(\sum_{1\leq s\leq r}\nu_*(x_s))(\sum_{1\leq t\leq r}(-1)^{|x_t||x^t|}x^s\wedge\nu_*(x^t)\wedge x_t)+\sum_{1\leq s\leq r}\nu_*(x_s)\nu_*(x^s).
\end{aligned}
\end{equation}
Note that
\begin{equation*}
(\iota(y_i)\iota(y_j)\iota(y_k)((\sum_{1\leq s\leq r}\nu_*(x_s))(\sum_{1\leq t\leq r}(-1)^{|x_t||x^t|}x^s\wedge\nu_*(x^t)\wedge x_t),y_l)=0.
\end{equation*}
It follows that
\begin{equation}\label{eq 4.12}
 (\iota(y_i)\iota(y_j)\iota(y_k)\phi_\mathfrak{\nu}^2,y_l)
=(\iota(y_i)\iota(y_j)\iota(y_k)(\sum_{1\leq s\leq r}\nu_*(x_s) \nu_*(x^s)),y_l).
\end{equation}

Let $\Lambda^\text{even}(\mathfrak{p})=\sum\limits_{i=0}\Lambda^{2i}(\mathfrak{p})$. Then $\Lambda^\text{even}(\mathfrak{p})$ is a subalgebra of $\Lambda(\mathfrak{p})$ with respect to the Clifford multiplication. We can extent $\nu_*:\mathfrak{r}\rightarrow\Lambda^2(\mathfrak{p})$ to a homomorphism of associative algebras
\begin{equation*}
\nu_*:T(\mathfrak{r})\rightarrow\Lambda^\text{even}(\mathfrak{p}).
\end{equation*}
By identities (\ref{eq 4.11}) and (\ref{eq 4.12}), we have
\begin{equation}\label{eq 4.13}
(\iota(y_i)\iota(y_j)\iota(y_k)\phi^2,y_l)=(\iota(y_i)\iota(y_j)\iota(y_k)(\nu_*(\Cas_{\mathfrak r})+\phi_\mathfrak{p}^2),y_l)
\end{equation}
It is clear that $(\nu_*(\Cas_{\mathfrak r}))_k=0$ if $k\notin\{0,2,4\}$. By Lemma \ref{lemma 2.1},
\begin{equation*}
\nu_*(\Cas_\mathfrak{r})=\sum\limits_{i=1}^r\nu_*(x_i)\nu_*(x^i)=\sum_{i=1}^r(-1)^{|x_i||x^i|}\nu_*(x^i)\nu_*(x_i).
\end{equation*}
Then, by Lemma \ref{lemma 2.10},
\begin{equation*}
\alpha(\nu_*(\Cas_\mathfrak{r}))=\sum\limits_{i=1}^r\alpha(\nu_*(x_i)\nu_*(x^i))=\sum_{i=1}^r(-1)^{|x_i||x^i|}\nu_*(x^i)\nu_*(x_i)=\nu_*(\Cas_\mathfrak{r}),
\end{equation*}
which implies that
\begin{equation*}
\nu_*(\Cas_\mathfrak{r})=(\nu_*(\Cas_\mathfrak{r}))_4+(\nu_*(\Cas_\mathfrak{r}))_0.
\end{equation*}
Clearly, $(\phi_\mathfrak{p}^2)_k=0$ if $k\notin\{0,2,4,6\}$. Note that
\begin{equation*}
\alpha(\phi_\mathfrak{p}^2)=(-1)^{|\phi_\mathfrak{p}||\phi_\mathfrak{p}|}\alpha(\phi_\mathfrak{p})\alpha(\phi_\mathfrak{p})=\phi_\mathfrak{p}^2,
\end{equation*}
which implies that
\begin{equation*}
\phi_\mathfrak{p}^2=(\phi_\mathfrak{p}^2)_4+(\phi_\mathfrak{p}^2)_0.
\end{equation*}
Hence
\begin{equation*}
\nu_*(\Cas_\mathfrak{r})+\phi_\mathfrak{p}^2=(\nu_*(\Cas_\mathfrak{r})+\phi_\mathfrak{p}^2)_4+(\nu_*(\Cas_\mathfrak{r})+\phi_\mathfrak{p}^2)_0.
\end{equation*}
Furthermore, by identities (\ref{eq 4.10}) and (\ref{eq 4.13}), $$P_\mathfrak{p}([y_i,[y_j,y_k]^\phi]^\phi-[[y_i,y_j]^\phi,y_k]^\phi-(-1)^{|y_i||y_j|}[y_j,[y_i,y_k]^\phi]^\phi)=0$$ if and only if $$(\nu_*(\Cas_\mathfrak{r})+\phi_\mathfrak{p}^2)_4=0,$$ that is, $\nu_*(\Cas_\mathfrak{r})+\phi_\mathfrak{p}^2$ is a constant.

In summary, we have the following theorem.
\begin{theorem}\label{theorem IV.1}
There is a one-to-one correspondence between the set $$\mathcal{V}=\{\phi_\mathfrak{p}\in(\Lambda_{\bar 0}^3(\mathfrak{p}))^\mathfrak{r}|\nu_*(\Cas_\mathfrak{r})+\phi_\mathfrak{p}^2\in\mathbb{C}\}$$ and the set of the pairs $(\nu,(\cdot,\cdot)_\mathfrak{g})$ of Lie super type. For any $\phi_\mathfrak{p}\in\mathcal{V}$, set $\phi=\phi_\mathfrak{r}+\phi_\nu+\phi_\mathfrak{p}$, the bracket of the Lie superalgebra corresponding to $\phi_\mathfrak{p}$ is defined by
\begin{equation*}
[z_1,z_2]^{\phi_\mathfrak{p}}=2\iota(z_1)\iota(z_2)\phi, \quad \forall z_1,z_2\in\mathfrak{g}.
\end{equation*}
\end{theorem}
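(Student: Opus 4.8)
The plan is to realize the stated correspondence through two mutually inverse assignments, most of whose verification has already been assembled in the discussion preceding the statement; the remaining task is to organize these pieces and check that the two assignments really are inverse to one another. Throughout, ``a Lie super type structure'' means a Lie superalgebra bracket $[\cdot,\cdot]$ on $\mathfrak{g}$ for which $(\nu,(\cdot,\cdot)_\mathfrak{g})$ is of Lie super type.

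First I would define the map from Lie super type structures to $\mathcal{V}$. Given such a bracket $[\cdot,\cdot]$, Theorem~\ref{theorem III.2} attaches to it the unique $\phi\in\Lambda_{\bar 0}^3\mathfrak{g}$ with $\phi^2\in\mathbb{C}$ and $[z_1,z_2]=[z_1,z_2]^\phi$. Expanding $\phi$ in the homogeneous basis built from $\{x_i\}$ and $\{y_j\}$ and invoking~(\ref{eq IV.2}),~(\ref{eq IV.3}), condition~(b) and $\mathfrak{r}\perp\mathfrak{p}$ (so that the $\Lambda^2(\mathfrak{r})\wedge\mathfrak{p}$ component of $\phi$, whose coefficients are proportional to $([x_i,x_j]^\phi,y_k)=0$, is forced to vanish), one reaches the decomposition $\phi=\phi_\mathfrak{r}+\phi_\nu+\phi_\mathfrak{p}$ in which $\phi_\mathfrak{r},\phi_\nu$ are given by~(\ref{eq IV.4}),~(\ref{eq IV.6}) and $\phi_\mathfrak{p}$ is the $\Lambda^3(\mathfrak{p})$-component. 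The computation with~(\ref{eq 2.2.4-2}) and the super Jacobi identity carried out above gives $[\nu_*(x),\phi_\mathfrak{p}]_C=0$, hence $\phi_\mathfrak{p}\in(\Lambda_{\bar 0}^3(\mathfrak{p}))^\mathfrak{r}$; and identity~(\ref{eq 4.13}), together with $\alpha$-invariance of $\phi^2$ and of $\nu_*(\Cas_\mathfrak{r})+\phi_\mathfrak{p}^2$ (each therefore concentrated in degrees $0$ and $4$), shows that $\phi^2\in\mathbb{C}$ forces $(\nu_*(\Cas_\mathfrak{r})+\phi_\mathfrak{p}^2)_4=0$, i.e.\ $\phi_\mathfrak{p}\in\mathcal{V}$.

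Conversely, given $\phi_\mathfrak{p}\in\mathcal{V}$, I put $\phi=\phi_\mathfrak{r}+\phi_\nu+\phi_\mathfrak{p}$ and $[z_1,z_2]^\phi=2\iota(z_1)\iota(z_2)\phi$. Skew super-symmetry of $[\cdot,\cdot]^\phi$ and invariance of $(\cdot,\cdot)_\mathfrak{g}$ follow as in Section~\ref{section3}, and condition~(b) is the explicit identities $2\iota(x_i)\iota(x_j)\phi=[x_i,x_j]$ and $2\iota(x_i)\iota(y_j)\phi=\nu(x_i)y_j$ established above. For the super Jacobi identity I split into four cases by the number of arguments lying in $\mathfrak{r}$: the $(\mathfrak{r},\mathfrak{r},\mathfrak{r})$ case holds since $\mathfrak{r}$ is a Lie superalgebra; the $(\mathfrak{r},\mathfrak{r},\mathfrak{p})$ case since $\mathfrak{p}$ is an $\mathfrak{r}$-module via $\nu$; the $(\mathfrak{r},\mathfrak{p},\mathfrak{p})$ case by~(\ref{eq IV.7}) (skew-symmetry, invariance, $\mathfrak{r}\perp\mathfrak{p}$) and~(\ref{eq IV.8}) (using $\phi_\mathfrak{p}\in(\Lambda^3(\mathfrak{p}))^\mathfrak{r}$); and the $(\mathfrak{p},\mathfrak{p},\mathfrak{p})$ case by~(\ref{eq 4.9}) for the $\mathfrak{r}$-projection and, for the $\mathfrak{p}$-projection, by Lemma~\ref{lemma III.1} together with~(\ref{eq 4.13}), which reduce its vanishing to $(\nu_*(\Cas_\mathfrak{r})+\phi_\mathfrak{p}^2)_4=0$, i.e.\ to $\phi_\mathfrak{p}\in\mathcal{V}$. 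Thus $[\cdot,\cdot]^\phi$ is a Lie super type structure, giving the map from $\mathcal{V}$ back.

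Finally I would check these two assignments are mutually inverse. Starting from a Lie super type bracket $[\cdot,\cdot]=[\cdot,\cdot]^\phi$ with $\phi=\phi_\mathfrak{r}+\phi_\nu+\phi_\mathfrak{p}$, the return trip produces $[\cdot,\cdot]^{\phi_\mathfrak{r}+\phi_\nu+\phi_\mathfrak{p}}=[\cdot,\cdot]^\phi=[\cdot,\cdot]$ because $\phi_\mathfrak{r}$ and $\phi_\nu$ depend only on $(\mathfrak{r},(\cdot,\cdot)_\mathfrak{r})$ and $\nu$. Starting from $\phi_\mathfrak{p}\in\mathcal{V}$, uniqueness in Theorem~\ref{theorem III.2} identifies the cubic element attached to $[\cdot,\cdot]^\phi$ with $\phi$ itself, and its $\Lambda^3(\mathfrak{p})$-projection is $\phi_\mathfrak{p}$ since $\phi_\mathfrak{r}\in\Lambda^3(\mathfrak{r})$ and $\phi_\nu\in\mathfrak{r}\wedge\Lambda^2(\mathfrak{p})$ contribute nothing there. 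Since every computational ingredient is already in place, I do not expect a serious obstacle; the step demanding the most care is precisely this bookkeeping for the inversion, namely confirming that in the $\mathbb{Z}$-graded decomposition of $\Lambda^3(\mathfrak{g})$ the three pieces $\phi_\mathfrak{r},\phi_\nu,\phi_\mathfrak{p}$ are independent, that the first two are completely forced, and that no $\Lambda^2(\mathfrak{r})\wedge\mathfrak{p}$ term can occur.
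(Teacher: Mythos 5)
Your proposal is correct and follows essentially the same route as the paper: the decomposition $\phi=\phi_\mathfrak{r}+\phi_\nu+\phi_\mathfrak{p}$ via Theorem \ref{theorem III.2} and conditions (\ref{eq IV.2})--(\ref{eq IV.3}) in one direction, and the case-by-case verification of the super Jacobi identity reducing to $(\nu_*(\Cas_\mathfrak{r})+\phi_\mathfrak{p}^2)_4=0$ in the other. The only (welcome) difference is that you make explicit the mutual-inverse bookkeeping that the paper leaves implicit.
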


For any $\phi_\mathfrak{p}\in\mathcal{V}$, set $\phi=\phi_\mathfrak{r}+\phi_\nu+\phi_\mathfrak{p}$. Let $\mathfrak{g}$ be the corresponding quadratic Lie superalgebra with the bracket $[z_1,z_2]^{\phi_\mathfrak{p}}=2\iota(z_1)\iota(z_2)\phi$. We denote by $\ad_\mathfrak{g}$ (resp. $\ad_\mathfrak{r}$) the adjoint representation of $\mathfrak{g}$ on itself (resp. $\mathfrak{r}$ on itself) and that extended to $U(\mathfrak{g})$ (resp. $U(\mathfrak{r})$).

Recall that $\phi_\mathfrak{r}\in\Lambda^3(\mathfrak{r})$, $\phi_\nu\in\pi_\Lambda(T^2(\mathfrak{p})\otimes T(\mathfrak{r}))$ and $\phi_\mathfrak{p}\in\Lambda^3(\mathfrak{p})$. Since $\mathfrak{r}$ is $(\cdot,\cdot)_\mathfrak{g}$-orthogonal to $\mathfrak{p}$, we have
\begin{equation*}
(\phi^2)_0=(\phi_\mathfrak{r}^2)_0+(\phi_\nu^2)_0+(\phi_\mathfrak{p}^2)_0
\end{equation*}
By the identity (\ref{eq 4.11+1}), we have
\begin{equation*}
(\phi_\nu^2)_0=(\sum_{1\leq i\leq r}\nu_*(x_i)\nu_*(x^i))_0=(\nu_*(\Cas_\mathfrak{r}))_0.
\end{equation*}
Then by Theorem \ref{theorem III.2}, \ref{theorem III.3} and \ref{theorem IV.1}, we have
\begin{equation*}
\begin{aligned}
\nu_*(\Cas_\mathfrak{r})+\phi_\mathfrak{p}^2&=(\nu_*(\Cas_\mathfrak{r})+\phi_\mathfrak{p}^2)_0=(\phi_\nu^2)_0+(\phi_\mathfrak{p}^2)_0\\
                                            &=(\phi^2)_0-(\phi_\mathfrak{r}^2)_0=\frac{1}{24}(\str\ad_{\mathfrak g}(\Cas_{\mathfrak g})-\str\ad_{\mathfrak r}(\Cas_{\mathfrak r})).
\end{aligned}
\end{equation*}

\begin{theorem}\label{theorem IV.2}
Assume that $(\nu,(\cdot,\cdot)_{\mathfrak g})$ is of Lie super type corresponding to $\phi_\mathfrak{p}\in\mathcal{V}$, i.e. the bracket of $\mathfrak g$ is defined by $[z_1,z_2]^{\phi_\mathfrak{p}}=2\iota(z_1)\iota(z_2)\phi$, where $\phi=\phi_\mathfrak{r}+\phi_\nu+\phi_\mathfrak{p}$. Then the constant
\begin{equation}\label{scalar}
\nu_*(\Cas_{\mathfrak r})+\phi_\mathfrak{p}^2=\frac{1}{24}(\str\ad_{\mathfrak g}(\Cas_{\mathfrak g})-\str\ad_{\mathfrak r}(\Cas_{\mathfrak r})).
\end{equation}
\end{theorem}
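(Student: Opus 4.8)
The plan is to assemble the identity from the three structural results already in hand: Theorem \ref{theorem III.2}/\ref{theorem III.3} which compute $\phi^2=(\phi^2)_0=\frac{1}{24}\str\ad_{\mathfrak g}(\Cas_{\mathfrak g})$ for the quadratic Lie superalgebra $\mathfrak g$; the analogous formula $\phi_{\mathfrak r}^2=(\phi_{\mathfrak r}^2)_0=\frac{1}{24}\str\ad_{\mathfrak r}(\Cas_{\mathfrak r})$ applied to the quadratic Lie superalgebra $\mathfrak r$ with cubic element $\phi_{\mathfrak r}$; Theorem \ref{theorem IV.1}, which tells us $\nu_*(\Cas_{\mathfrak r})+\phi_{\mathfrak p}^2$ is indeed a constant (so it equals its degree-$0$ component). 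The bridge between these is the orthogonal decomposition $\phi=\phi_{\mathfrak r}+\phi_\nu+\phi_{\mathfrak p}$ established above, together with identity (\ref{eq 4.11+1}) which shows $(\phi_\nu^2)_0=(\sum_{1\le i\le r}\nu_*(x_i)\nu_*(x^i))_0=(\nu_*(\Cas_{\mathfrak r}))_0$.

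First I would record the cross-term analysis: since $\phi_{\mathfrak r}\in\Lambda(\mathfrak r)$, $\phi_{\mathfrak p}\in\Lambda(\mathfrak p)$, and $\phi_\nu\in\pi_\Lambda\big(T^2(\mathfrak p)\otimes T(\mathfrak r)\big)$, and because $\mathfrak r$ and $\mathfrak p$ are $(\cdot,\cdot)_{\mathfrak g}$-orthogonal, the scalar (degree-$0$) part of the Clifford square $\phi^2=(\phi_{\mathfrak r}+\phi_\nu+\phi_{\mathfrak p})^2$ receives no contribution from products mixing the $\mathfrak r$- and $\mathfrak p$-factors: any such cross term lies in a bidegree $T^a(\mathfrak r)\otimes T^b(\mathfrak p)$ with $a,b$ not both zero after Clifford reduction, hence has vanishing scalar part. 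Thus $(\phi^2)_0=(\phi_{\mathfrak r}^2)_0+(\phi_\nu^2)_0+(\phi_{\mathfrak p}^2)_0$, exactly as displayed just before the theorem.

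Next I would substitute. By (\ref{eq 4.11+1}), $(\phi_\nu^2)_0=(\nu_*(\Cas_{\mathfrak r}))_0$; by Theorem \ref{theorem IV.1} the quantity $\nu_*(\Cas_{\mathfrak r})+\phi_{\mathfrak p}^2$ is scalar, so it equals $(\nu_*(\Cas_{\mathfrak r})+\phi_{\mathfrak p}^2)_0=(\nu_*(\Cas_{\mathfrak r}))_0+(\phi_{\mathfrak p}^2)_0=(\phi_\nu^2)_0+(\phi_{\mathfrak p}^2)_0=(\phi^2)_0-(\phi_{\mathfrak r}^2)_0$. Finally, applying Theorem \ref{theorem III.3} to $\mathfrak g$ gives $(\phi^2)_0=\frac{1}{24}\str\ad_{\mathfrak g}(\Cas_{\mathfrak g})$, and applying it to $\mathfrak r$ (which is a quadratic Lie superalgebra with cubic element $\phi_{\mathfrak r}$, exactly the $\mathfrak r=0$-type situation of Section \ref{section3} internal to $\mathfrak r$) gives $(\phi_{\mathfrak r}^2)_0=\frac{1}{24}\str\ad_{\mathfrak r}(\Cas_{\mathfrak r})$. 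Combining yields (\ref{scalar}).

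The only genuinely delicate point is the vanishing of the mixed cross terms in $(\phi^2)_0$, i.e. justifying the displayed identity $(\phi^2)_0=(\phi_{\mathfrak r}^2)_0+(\phi_\nu^2)_0+(\phi_{\mathfrak p}^2)_0$ cleanly rather than by brute force. The clean argument is a bidegree bookkeeping: extend the natural $\mathbb Z\times\mathbb Z$-grading (counting $\mathfrak r$-letters and $\mathfrak p$-letters separately) through the skew-symmetrization identification; Clifford multiplication of an element supported on $\mathfrak r$ with one supported on $\mathfrak p$ stays supported on a proper mixed bidegree because no contraction $(x,y)_{\mathfrak g}$ with $x\in\mathfrak r$, $y\in\mathfrak p$ is nonzero, so it cannot contribute to the total degree-$0$ piece. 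Everything else is a direct chain of substitutions into results already proved, so I expect the write-up to be short.
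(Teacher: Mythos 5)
Your proposal is correct and follows essentially the same route as the paper: decompose $(\phi^2)_0=(\phi_\mathfrak{r}^2)_0+(\phi_\nu^2)_0+(\phi_\mathfrak{p}^2)_0$ via orthogonality of $\mathfrak{r}$ and $\mathfrak{p}$, identify $(\phi_\nu^2)_0=(\nu_*(\Cas_\mathfrak{r}))_0$ by (\ref{eq 4.11+1}), invoke Theorem \ref{theorem IV.1} for constancy, and apply Theorem \ref{theorem III.3} to both $\mathfrak{g}$ and $\mathfrak{r}$. Your bidegree bookkeeping for the vanishing of the mixed cross terms is a welcome elaboration of a step the paper only asserts.
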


\section{Dirac operator for quadratic Lie superalgebras}
Let ${\mathfrak g}$ be a finite dimensional complex quadratic Lie superalgebra with respect to $(\cdot,\cdot)$, let ${\mathfrak r}$ be a subalgebra of ${\mathfrak g}$ such that the restriction of $(\cdot,\cdot)$ to ${\mathfrak r}$ is non-degenerate, let ${\mathfrak g}={\mathfrak r}\oplus{\mathfrak p}$ be the orthogonal decomposition with respect to $(\cdot,\cdot)$, and let $\nu$ be the adjoint representation of ${\mathfrak r}$ on ${\mathfrak p}$. Then $(\nu, (\cdot,\cdot))$ is of Lie super type.

Analogous to the cubic Dirac operator of quadratic Lie algebra introduced by Kostant in \cite{Ko2}, we will define the cubic Dirac operator $D({\mathfrak g},{\mathfrak r})$ of $\mathfrak g$ corresponding to the above decomposition ${\mathfrak g}={\mathfrak r}\oplus{\mathfrak p}$. Denote by $\xi$ the injection map $\mathfrak{g}\rightarrow U(\mathfrak{g})$ and its extension $U(\mathfrak{g})\rightarrow U(\mathfrak{g})$. Define the cubic Dirac operator $D({\mathfrak g},{\mathfrak r})\in U({\mathfrak g})\otimes C({\mathfrak p})$ by
\begin{equation}\label{eq 5.1}
D({\mathfrak g},{\mathfrak r})=\sum_{i=1}^p \xi(y_i)\otimes y^i+1\otimes\phi_{\mathfrak p},
\end{equation}
where
$
\phi_\mathfrak{p}=-\frac{1}{12}\sum_{1\leq i, j, k\leq p}(-1)^{|y_i||y_j|+|y_k||y_k|}([y_i,y_j],y_k)y^i\wedge y^j\wedge y^k.
$
It is clear that $D({\mathfrak g},{\mathfrak r})$ is independent of the choice of basis. Note that
\begin{equation}\label{eq 5.2}
(\phi_{\mathfrak p},y_i\wedge y_j\wedge y_k)=(\phi,y_i\wedge y_j\wedge y_k)=-\frac{1}{2}([y_i,y_j],y_k)
\end{equation}
and $|y_i|=|y^i|$ for $1\leq i \leq p$.
Set $\Box_1=\sum_{i=1}^p \xi(y_i)\otimes y^i$ and $\Box_2=1\otimes\phi_{\mathfrak p}$. Then
\begin{equation*}
D({\mathfrak g},{\mathfrak r})=\Box_1+\Box_2.
\end{equation*}
Recall that  $U({\mathfrak g})$ and $C({\mathfrak p})$ are both superalgebras and the multiplication on $U({\mathfrak g})\otimes C({\mathfrak p})$ is defined by the identity (\ref{eq 2.1.2}). Since $y^j y^i+(-1)^{|y_j||y_i|}y^i y^j=2(y^j,y^i)$, $y^i y^j=y^i\wedge y^j+(y^i,y^j)$, and $\xi(y_i)\xi(y_j)-(-1)^{|y_i||y_j|}\xi(y_j)\xi(y_i)=\xi([y_i,y_j])$ we have
\begin{equation*}
\begin{aligned}
 (\Box_1)^2=&(\sum_{i=1}^p\xi(y_i)\otimes y^i)(\sum_{j=1}^p\xi(y_j)\otimes y^j)=\sum_{1\leq i,j\leq p}(-1)^{|y_i||y_j|}\xi(y_i)\xi(y_j)\otimes y^i y^j\\
=&\frac{1}{2}\sum_{1\leq i,j\leq p}((-1)^{|y_i||y_j|}\xi(y_i)\xi(y_j)\otimes y^i y^j+(-1)^{|y_j||y_i|}\xi(y_j)\xi(y_i)\otimes y^j y^i)\\
=&\sum_{1\leq i,j\leq p}(-1)^{|y_j||y_i|}(y^j,y^i)\xi(y_j)\xi(y_i)\otimes 1+\frac{1}{2}\sum_{1\leq i,j\leq p}(-1)^{|y_i||y_j|}\xi([y_i,y_j])\otimes y^i y^j\\
=&\sum_{1\leq i,j\leq p}(y^i,y^j)\xi(y_j)\xi(y_i)\otimes 1+\frac{1}{2}\sum_{1\leq i,j\leq p}(-1)^{|y_i||y_j|}\xi([y_i,y_j])\otimes y^i\wedge y^j\\
&+\frac{1}{2}\sum_{1\leq i,j\leq p}(-1)^{|y_i||y_j|}(y^i,y^j)\xi([y_i,y_j])\otimes 1.
\end{aligned}
\end{equation*}
Since $y^j=\sum_{k=1}^p(y^k,y^j)y_k$, we have
\begin{equation*}
\sum_{1\leq i,j\leq p}(y^i,y^j)\xi(y_j)\xi(y_i)\otimes 1=\sum_{1\leq i,j,k\leq p}(y^k,y^j)(y^i,y_k)\xi(y_j)\xi(y_i)\otimes 1.
\end{equation*}
Note that $|y_j|=|y_k|$ if $(y^k,y^j)\neq 0$. Then $\sum_{j=1}^p(y^k,y^j)\xi(y_j)=(-1)^{|y_k||y_k|}\xi(y^k)$.
Hence
\begin{equation*}
\begin{aligned}
\sum_{1\leq i,j\leq p}(y^i,y^j)\xi(y_j)\xi(y_i)\otimes 1=&\sum_{1\leq i,k\leq p}(-1)^{|y_k||y_k|}(y^i,y_k)\xi(y^k)\xi(y_i)\otimes 1 \\ &=\sum_{i=1}^p(-1)^{|y_i||y_i|}\xi(y^i)\xi(y_i)\otimes 1.
\end{aligned}
\end{equation*}
By Lemma \ref{lemma 2.1}, we have
\begin{equation*}
\sum_{1\leq i,j\leq p}(y^i,y^j)\xi(y_j)\xi(y_i)\otimes 1=\sum_{i=1}^p\xi(y_i)\xi(y^i)\otimes 1.
\end{equation*}
Since
\begin{equation*}
\begin{aligned}
 &\sum_{1\leq i,j\leq p}(-1)^{|y_i||y_j|}(y^i,y^j)\xi([y_i,y_j])\otimes 1\\
=&\frac{1}{2}\sum_{1\leq i,j\leq p}((-1)^{|y_i||y_j|}(y^i,y^j)\xi([y_i,y_j])\otimes 1+(-1)^{|y_j||y_i|}(y^j,y^i)\xi([y_j,y_i])\otimes 1)\\
=&\frac{1}{2}\sum_{1\leq i,j\leq p}(y^j,y^i)\xi([y_i,y_j]+(-1)^{|y_j||y_i|}[y_j,y_i])\otimes 1\\
=&0,
\end{aligned}
\end{equation*}
we have
\begin{equation*}
\begin{aligned}
(\Box_1)^2=&\sum_{i=1}^p\xi(y_i)\xi(y^i)\otimes 1+\frac{1}{2}\sum_{1\leq i,j\leq p}(-1)^{|y_i||y_j|}\xi([y_i,y_j])\otimes y^i\wedge y^j\\
=&\sum_{i=1}^p\xi(y_i)\xi(y^i)\otimes 1+\frac{1}{2}\sum_{1\leq i,j\leq p}(-1)^{|y_i||y_j|}\xi([y_i,y_j]_{\mathfrak r})\otimes y^i\wedge y^j\\
           &+\frac{1}{2}\sum_{1\leq i,j\leq p}(-1)^{|y_i||y_j|}\xi([y_i,y_j]_{\mathfrak p})\otimes y^i\wedge y_j.
\end{aligned}
\end{equation*}
Denote by $I,II$ and $III$ the three summands in the right side of the above equation respectively, that is,
\begin{equation}\label{eq 5.3}
(\Box_1)^2=I+II+III.
\end{equation}
By identities (\ref{eq 2.2.4}), (\ref{eq 2.2.4-1}), (\ref{eq IV.5+1}) and (\ref{eq IV.5+2})
\begin{equation}\label{eq 5.4}
\begin{aligned}
II=&\frac{1}{2}\sum_{1\leq i,j\leq p}(-1)^{|y_i||y_j|}\xi([y_i,y_j]_{\mathfrak r})\otimes y^i\wedge y^j\\
=&\frac{1}{2}\sum_{1\leq i,j\leq p}\sum_{k=1}^r(-1)^{|y_i||y_j|}(x^k,[y_i,y_j])\xi(x_k)\otimes y^i\wedge y^j\\
=&\frac{1}{2}\sum_{1\leq i,j\leq p}\sum_{k=1}^r(-1)^{|y_i||y_j|}([x^k,y_i],y_j)\xi(x_k)\otimes y^i\wedge y^j\\
=&-\sum_{1\leq i,j\leq p}\sum_{k=1}^r(-1)^{|y_i|(|x^k|+|y_j|)}(\iota(y_i)\nu_*(x^k),y_j)\xi(x_k)\otimes y^i\wedge y^j\\\\
=&-\sum_{k=1}^r\sum_{1\leq i,j\leq p}(-1)^{|y_i||y_j|}(\nu_*(x^k),y_i\wedge y_j)\xi(x_k)\otimes y^i\wedge y^j\\
=&-2\sum_{k=1}^r\xi(x_k)\otimes \nu_*(x^k).
\end{aligned}
\end{equation}
Define a diagonal embedding
$\zeta: \mathfrak r\rightarrow U(\mathfrak g)\otimes C(\mathfrak p)$ by $$\zeta(x)=\xi(x)\otimes 1+1\otimes \nu_*(x),\quad \forall x\in  \mathfrak r.$$ Extending $\zeta$ to a homomorphism $\zeta:\ U({\mathfrak r})\rightarrow U(\mathfrak g)\otimes C(\mathfrak p)$, by Lemma \ref{lemma 2.1}, we have
\begin{equation*}
\begin{aligned}
\zeta(\Cas_{\mathfrak r})=&\sum_{i=1}^r(\xi(x_i)\otimes 1+1\otimes \nu_*(x_i))(\xi(x^i)\otimes 1+1\otimes \nu_*(x^i))\\
                         =&\sum_{i=1}^r(\xi(x_i)\xi(x^i)\otimes 1+\xi(x_i)\otimes\nu_*(x^i)+(-1)^{|x_i||x^i|}\xi(x^i)\otimes\nu_*(x_i)+1\otimes \nu_*(x_i)\nu_*(x^i))\\
                         =&\sum_{i=1}^r(\xi(x_i)\xi(x^i)\otimes 1+2\xi(x_i)\otimes\nu_*(x^i)+1\otimes \nu_*(x_i)\nu_*(x^i)).
\end{aligned}
\end{equation*}
It follows from identities (\ref{eq 5.3}) and (\ref{eq 5.4}) that
\begin{equation}\label{eq 5.5}
I+II+\zeta(\Cas_{\mathfrak r})=\xi(\Cas_{\mathfrak g})\otimes 1+1\otimes \nu_*(\Cas_{\mathfrak r}).
\end{equation}
By Lemma \ref{lemma 2.11} and the identity (\ref{eq IV.5+2}), we have
\begin{equation}\label{eq 5.6}
\begin{aligned}
\Box_1\Box_2+\Box_2\Box_1=&\sum_{k=1}^p\xi(y_k)\otimes(y^k \phi_{\mathfrak p}+\phi_{\mathfrak p} y^k)=2\sum_{k=1}^p \xi(y_k)\otimes \iota(y^k)\phi_{\mathfrak p}\\
=&\sum_{k=1}^p\sum_{1\leq i,j\leq p}(-1)^{|y_i||y_j|}(\iota(y^k)\phi_{\mathfrak p},y_i\wedge y_j)\xi(y_k)\otimes y^i\wedge y^j\\
=&\sum_{1\leq i,j,k\leq p}(-1)^{|y_i||y_j|}(\phi_{\mathfrak p},y^k\wedge y_i\wedge y_j)\xi(y_k)\otimes y^i\wedge y^j\\
=&-\frac{1}{2}\sum_{1\leq i,j,k\leq p}(-1)^{|y_i||y_j|}([y^k,y_i],y_j)\xi(y_k)\otimes y^i\wedge y^j\\
=&-\frac{1}{2}\sum_{1\leq i,j,k\leq p}(-1)^{|y_i||y_j|}(y^k,[y_i,y_j])\xi(y_k)\otimes y^i\wedge y^j\\
=&-\frac{1}{2}\sum_{1\leq i,j\leq p}(-1)^{|y_i||y_j|}\xi([y_i,y_j])\otimes y^i\wedge y^j\\
=&-III.
\end{aligned}
\end{equation}
By identities (\ref{eq 5.3}), (\ref{eq 5.5}) and (\ref{eq 5.6}), we have
\begin{equation*}
(D({\mathfrak g},{\mathfrak r}))^2=\xi(\Cas_{\mathfrak g})\otimes 1-\zeta(\Cas_{\mathfrak r})+1\otimes (\nu_*(\Cas_{\mathfrak r})+\phi_\mathfrak{p}^2).
\end{equation*}
By Theorem \ref{theorem IV.2}, we have an analogue of the Parthasarathy's formula.
\begin{theorem}\label{theorem 5.1}
Let ${\mathfrak g}$ be a finite dimensional complex quadratic Lie superalgebra with respect to $(\cdot,\cdot)$ and let ${\mathfrak r}$ be a subalgebra of ${\mathfrak g}$ such that the restriction of $(\cdot,\cdot)$ to ${\mathfrak r}$ is non-degenerate. Define $D({\mathfrak g},{\mathfrak r})\in U({\mathfrak g})\otimes C({\mathfrak p})$ by the identity (\ref{eq 5.1}). Then
\begin{equation}\label{eq 5.7}
(D({\mathfrak g},{\mathfrak r}))^2=\xi(\Cas_{\mathfrak g})\otimes 1-\zeta(\Cas_{\mathfrak r})+\frac{1}{24}(\str\ad_{\mathfrak g}(\Cas_{\mathfrak g})-\str\ad_{\mathfrak r}(\Cas_{\mathfrak r}))(1\otimes 1).
\end{equation}
\end{theorem}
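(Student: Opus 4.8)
The plan is to assemble $(D(\mathfrak{g},\mathfrak{r}))^2$ from the three pieces coming from the decomposition $D(\mathfrak{g},\mathfrak{r}) = \Box_1 + \Box_2$, and then to evaluate the resulting constant by invoking Theorem \ref{theorem IV.2}. Expanding in the associative algebra $U(\mathfrak{g})\otimes C(\mathfrak{p})$ gives $(D(\mathfrak{g},\mathfrak{r}))^2 = (\Box_1)^2 + (\Box_1\Box_2 + \Box_2\Box_1) + (\Box_2)^2$, so it suffices to treat these summands one at a time.

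First I would compute $(\Box_1)^2$. Using the twisted multiplication (\ref{eq 2.1.2}) on $U(\mathfrak{g})\otimes C(\mathfrak{p})$, the Clifford relation $y^iy^j + (-1)^{|y_i||y_j|}y^jy^i = 2(y^i,y^j)$, the enveloping-algebra relation $\xi(y_i)\xi(y_j) - (-1)^{|y_i||y_j|}\xi(y_j)\xi(y_i) = \xi([y_i,y_j])$, the change of basis $y^j = \sum_k(y^k,y^j)y_k$, and Lemma \ref{lemma 2.1}, one obtains $(\Box_1)^2 = I + II + III$ as in (\ref{eq 5.3}), where $I = \sum_{i=1}^p\xi(y_i)\xi(y^i)\otimes 1$ and $II$, $III$ are the $\mathfrak{r}$- and $\mathfrak{p}$-valued parts of $\tfrac12\sum(-1)^{|y_i||y_j|}\xi([y_i,y_j])\otimes y^i\wedge y^j$ coming from $[y_i,y_j] = [y_i,y_j]_{\mathfrak{r}} + [y_i,y_j]_{\mathfrak{p}}$; the scalar-valued sum $\sum(-1)^{|y_i||y_j|}(y^i,y^j)\xi([y_i,y_j])\otimes 1$ drops out by skew super-symmetry. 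Then $(\Box_2)^2 = 1\otimes\phi_{\mathfrak{p}}^2$ is immediate, while $\Box_1\Box_2 + \Box_2\Box_1 = -III$ by (\ref{eq 5.6}): the key input is Lemma \ref{lemma 2.11} with $k=3$, giving $y^k\phi_{\mathfrak{p}} + \phi_{\mathfrak{p}}y^k = 2\iota(y^k)\phi_{\mathfrak{p}}$, after which one expands $\iota(y^k)\phi_{\mathfrak{p}}$ in the basis $\{y^i\wedge y^j\}$ via (\ref{eq IV.5+2}), uses (\ref{eq 5.2}) with the invariance of $(\cdot,\cdot)$, and symmetrizes. The two $III$ terms cancel, so $(D(\mathfrak{g},\mathfrak{r}))^2 = I + II + 1\otimes\phi_{\mathfrak{p}}^2$.

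It remains to identify $I + II$. From (\ref{eq 5.4}), $II = -2\sum_{k=1}^r\xi(x_k)\otimes\nu_*(x^k)$. Expanding the diagonal embedding on $\Cas_{\mathfrak{r}} = \sum_{i=1}^r x_ix^i$, with $\zeta(x) = \xi(x)\otimes 1 + 1\otimes\nu_*(x)$ and Lemma \ref{lemma 2.1}, gives $\zeta(\Cas_{\mathfrak{r}}) = \sum_i\xi(x_i)\xi(x^i)\otimes 1 + 2\sum_i\xi(x_i)\otimes\nu_*(x^i) + 1\otimes\nu_*(\Cas_{\mathfrak{r}})$. Adding, the cross terms cancel and $I + II + \zeta(\Cas_{\mathfrak{r}}) = \sum_{i=1}^p\xi(y_i)\xi(y^i)\otimes 1 + \sum_{i=1}^r\xi(x_i)\xi(x^i)\otimes 1 + 1\otimes\nu_*(\Cas_{\mathfrak{r}}) = \xi(\Cas_{\mathfrak{g}})\otimes 1 + 1\otimes\nu_*(\Cas_{\mathfrak{r}})$, since $\{x_i\}\cup\{y_i\}$ together with its dual basis $\{x^i\}\cup\{y^i\}$ computes $\Cas_{\mathfrak{g}}$. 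Hence $(D(\mathfrak{g},\mathfrak{r}))^2 = \xi(\Cas_{\mathfrak{g}})\otimes 1 - \zeta(\Cas_{\mathfrak{r}}) + 1\otimes(\nu_*(\Cas_{\mathfrak{r}}) + \phi_{\mathfrak{p}}^2)$. Finally, since $\nu$ is the adjoint action of $\mathfrak{r}$ on $\mathfrak{p}$, the pair $(\nu,(\cdot,\cdot))$ is of Lie super type (as recorded at the start of the section) with associated cubic element $\phi_{\mathfrak{p}}$ exactly the one in (\ref{eq 5.1}), so Theorem \ref{theorem IV.2} replaces the last summand by the scalar $\tfrac1{24}(\str\ad_{\mathfrak{g}}(\Cas_{\mathfrak{g}}) - \str\ad_{\mathfrak{r}}(\Cas_{\mathfrak{r}}))$, which is (\ref{eq 5.7}).

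I expect the main obstacle to be the sign bookkeeping underlying the two identities $(\Box_1)^2 = I + II + III$ and $\Box_1\Box_2 + \Box_2\Box_1 = -III$. Both rely on correctly propagating the Koszul signs $(-1)^{|y_i||y_j|}$ produced by the twisted multiplication (\ref{eq 2.1.2}) and by the contraction operator $\iota$, and it is precisely the matching of these signs that forces the two $III$ contributions to cancel, so that $(D(\mathfrak{g},\mathfrak{r}))^2$ contains no term cubic in $\mathfrak{p}$. Once that cancellation is secured, the remaining steps are routine identifications of Casimir elements.
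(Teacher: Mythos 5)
Your proposal is correct and follows essentially the same route as the paper: the same decomposition $(\Box_1)^2=I+II+III$, the same identification $II=-2\sum_k\xi(x_k)\otimes\nu_*(x^k)$ absorbed into $\zeta(\Cas_{\mathfrak r})$, the same cancellation $\Box_1\Box_2+\Box_2\Box_1=-III$ via Lemma \ref{lemma 2.11}, and the final appeal to Theorem \ref{theorem IV.2} to evaluate $1\otimes(\nu_*(\Cas_{\mathfrak r})+\phi_{\mathfrak p}^2)$ as the stated scalar. No gaps.
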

\begin{remark}
In \cite{Ko2}, Kostant proved the identity (\ref{eq 5.7}) when $\mathfrak {g}$ is a quadratic Lie algebra and $\mathfrak {r}$ is quadratic subalgebra of $\mathfrak {g}$. In fact, for the Lie algebraic case, the formula of $(D({\mathfrak g},{\mathfrak r}))^2$ in terms of Casimir elements goes back to Parthasarathy. In \cite{Par}, he obtained the formula under the assumption that ${\mathfrak g}={\mathfrak r}\oplus {\mathfrak p}$ is a Cartan decomposition and $\rank({\mathfrak g})=\rank({\mathfrak r})$. For this case, $\phi_\mathfrak{p}=0$. The Dirac operators for Lie superalgebras have been studied by several groups of researchers (\cite{HP,KaMP1,KaMP2,La,Pe}). In \cite{HP}, Huang and Pand\v{z}i\'{c} proved the identity (\ref{eq 5.7}) for the case ${\mathfrak r}=\mathfrak{g}_{\bar 0}$. In \cite{Pe}, Pengpan constructed the cubic Dirac operator for both full Lie superalgebra ${\mathfrak g}$ and its equal rank embeddings, where ${\mathfrak g}$ is a basic Lie superalgebra. The author also derived a formula for the square of Dirac operators (see the formula (40) in \cite{Pe}). In the infinite dimensional Lie superalgebras case, Landweber studied an affine analogue of the cubic Dirac operator (\cite{La}) for loop algebras, which was introduced much earlier by Kac and Todorov in \cite{KaT} on unitary representations of Neveu-Schwarz and Ramond superalgebras, and was studied further by Kac, M\"{o}seneder Frajria and Papi in \cite{KaMP1}.
\end{remark}

\section*{Acknowledgments}
This work is supported in part by National Natural Science Foundation of China (no. 11571182) and the talents foundation of Central South University of Forestry and Technology (104-0089). The authors thank Professor F.H. Zhu for valuable discussions, and the first author thanks Professor C.M. Bai for the invitation to visit Chern Institute of Mathematics since part of this work was done during the visit.

\end{document}